\newtheorem{theorem}{Theorem}[section]
\newtheorem{lemma}[theorem]{Lemma}
\newtheorem{proposition}[theorem]{Proposition}
\newtheorem{corollary}[theorem]{Corollary}
\theoremstyle{definition}
\newtheorem{remark}[theorem]{Remark}
\newtheorem{ipotesi}[theorem]{Assumption}
\numberwithin{equation}{section}
\newcommand{\N}{\mathbb{N}} 
\newcommand{\Z}{\mathbb{Z}} 
\newcommand{\R}{\mathbb{R}} 
\newcommand{\bA}{\mathbf{A}} 
\newcommand{\bC}{\mathbf{C}} 
\newcommand{\bB}{\mathbf{B}} 
\newcommand{\p}{\mathbf{p}} 
\newcommand{\G}{\mathcal{G}} 
\newcommand{\bG}{\mathbf{G}} 
\newcommand{\mass}{\mathbf{M}} 
\newcommand\res{\mathop{\hbox{\vrule height 7pt width .3pt depth 0pt\vrule height .3pt width 5pt depth 0pt}}\nolimits}
\newcommand{\reg}{\mathrm{Reg}} 
\newcommand{\sing}{\mathrm{Sing}} 
\newcommand{\Sing}{\mathrm{Sing}} 
\newcommand{\bE}{\mathbf{E}} 
\newcommand{\bh}{\mathbf{h}} 
\newcommand{\bI}{\mathbf{I}}
\newcommand{\Ha}{\mathcal{H}} 
\newcommand{\eps}{\varepsilon} 
\newcommand{\spt}{\mathrm{spt}} 
\newcommand{\B}{\mathbf{B}} 
\newcommand{\Lip}{\mathrm{Lip}} 
\renewcommand{\epsilon}{\varepsilon}
\def\XXint#1#2#3{{\setbox0=\hbox{$#1{#2#3}{\int}$ }
		\vcenter{\hbox{$#2#3$ }}\kern-.6\wd0}}
\def\I#1{{\mathcal{A}}_{#1}}
\newcommand{\Iq}{{\mathcal{A}}_Q}
\def\a#1{\left\llbracket{#1}\right\rrbracket}
\newcommand{\norm}[1]{\left\lVert#1\right\rVert} 
\newcommand{\etab}{\boldsymbol{\eta}}
\newcommand{\be}{\mathbf{e}}
\title[Stationary $2$-valued currents]{measure $0$ of the singular set for  $2$-valued stationary hypercurrents}
\author[J. Hirsch]{Jonas Hirsch}
\address{Mathematisches Institut, Universit\"at Leipzig, Augustusplatz 10, D-04109 Leipzig, Germany}
\email{hirsch@math.uni-leipzig.de}
\author[L. Spolaor]{Luca Spolaor}
\address{Department of Mathematics, UC San Diego, AP\&M, La Jolla, California, 92093, USA}
\email{lspolaor@ucsd.edu}
\begin{document}

\maketitle

\begin{abstract}
We prove that the singular set of a multiplicity $2$ integral hypercurrent that is stationary in the sense of varifolds has a singular set of measure zero.
\end{abstract}

\tableofcontents

\section{Introduction}

The main focus of this paper is to investigate the optimal regularity of integral stationary currents, that is a measure theoretic generalization of oriented minimal surfaces, under no
further variational properties, such as minimization or stability.

In his groundbreaking work \cite{All}, Allard proved that the singular set of stationary integral varifolds is meager. Since then little to no progress has been made on the question of the optimal dimension of the singular set for integral stationary varifolds (see \cite{HirschSpolaor2024,BMW}). In this note we answer this question under two
assumptions: multiplicity 2 and orientability. We do this by applying Almgren’s strategy in the stationary setting,
that is without any minimizing (nor stability) assumption.

Our main result is the following:


\begin{theorem}\label{thm:main}
    Let $T$ an $m$-dimensional stationary integral current in an open set $U\subset \R^{m+1}$. Suppose moreover that 
    \[
    \Theta_T(p)\leq 2\qquad \forall p\in \spt(T)\cap U\,,
    \]
    where $\Theta_T(p)$ denotes the density of $T$ at a point $p\in U$.
    Then, there is a closed set $\sing(T)$ such that $T$ is a smooth embedded submanifold in $U \setminus \sing(T))$ and $\Ha^m(\sing(T)=0$. 

    More precisely, there is an open set $\mathcal{O} \subset \R^{m+1}$ such that $\norm{T}(U\setminus \mathcal{O})=0$ and $\dim(\Sing(T)\cap \mathcal{O}) \le m-1$.
\end{theorem}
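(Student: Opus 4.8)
\medskip
\noindent\textbf{Proof strategy.}
The plan is to carry out Almgren's scheme---two-valued Lipschitz approximation, center manifold, frequency function---in the merely \emph{stationary} setting (no minimization, no stability), exploiting at each step the rigidity forced by multiplicity $2$ in codimension $1$; the only structural inputs will be the stationarity (first-variation) identity and $\partial T=0$. I would first reduce the problem. By Allard's regularity theorem every $p\in\spt(T)$ with $\Theta_T(p)=1$ lies in $\reg(T)$, the relatively open set of points near which $\spt(T)$ is a smooth embedded hypersurface, so $\Sing(T):=\spt(T)\setminus\reg(T)$ is closed. Let $\cS^{m-1}$ be the Almgren--White stratum of points admitting no tangent cone invariant under an $m$-dimensional group of translations; the stratification theorem, valid for every stationary integral varifold, gives $\dim\cS^{m-1}\le m-1$. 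If $p\notin\cS^{m-1}$ then some tangent cone $C$ at $p$ is invariant under translations along an $m$-plane $V$, hence, being a cone, $C=k\a{V}$ with $k\in\N$ the multiplicity; since $k=\Theta_C(0)=\Theta_T(p)$ and $1\le\Theta_T(p)\le 2$, either $k=1$ and $p\in\reg(T)$ by Allard, or $k=2$. Setting $Q:=\{\Theta_T=2\}$ (closed, since the density is upper semicontinuous and $\le 2$) and $F:=Q\setminus\cS^{m-1}$, this shows $\Sing(T)\subset\cS^{m-1}\cup F$, where every $p\in F$ has a tangent cone $2\a{H_p}$ for a hyperplane $H_p$; it remains to control $\Sigma_F:=\Sing(T)\cap F$.

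\medskip
\noindent\textbf{The multiplicity-two structure at a flat point.}
Fix $p\in F$. At any scale $r$ at which the rescaled current $T_{p,r}$ (translate $p$ to $0$, dilate by $1/r$) has small $L^2$-excess relative to $2\a{H_p}$---a condition met at arbitrarily small scales, since $2\a{H_p}$ is a tangent cone---a two-valued Lipschitz approximation would produce a Lipschitz map $u\colon B_r\cap H_p\to\Iq(\R)$ (with $Q=2$; the target is $\R$ because the codimension is $1$) with $|\nabla u|$ small, $\mathrm{graph}(u)$ approximating $\spt(T_{p,r})$, and $\norm{T_{p,r}}(B_r)=2\omega_m r^m+\tfrac12\int|\nabla u|^2+o\!\left(\int|\nabla u|^2\right)$. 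Writing $u=\a{a+\beta}+\a{a-\beta}$ with $a:=\eta\circ u$ the single-valued average and $\beta\ge 0$ the \emph{scalar} spread, the codimension-one and multiplicity-two structure becomes decisive: the entire two-valued part is encoded by the single nonnegative $\beta$; on $\{\beta>0\}$ the graph splits into two disjoint classical minimal graphs $a\pm\beta$, there $a$ is almost harmonic while $\beta$ (with a locally fixed sign) solves the \emph{linear} second-order elliptic equation obeyed by the difference of two solutions of the minimal surface equation; and, modulo the approximation error, $T$ is singular near $p$ only where $\beta=0$. So---\emph{provided this picture can be propagated to all small scales}---either $\beta\equiv 0$, whence $T=2\a\Sigma$ with $\Sigma$ smooth and $p\in\reg(T)$, or $\beta\not\equiv0$ and then, by the rectifiability of nodal sets of elliptic equations, $\Sing(T)$ near $p$ sits inside the $(m-1)$-rectifiable set $\{\beta=0\}$ (its genuine branch part $\{\beta=\nabla\beta=0\}$ having dimension $\le m-2$).

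\medskip
\noindent\textbf{Center manifold, frequency, and the main obstacle.}
To make that picture valid at \emph{all} small scales---not only along scales where the excess happens to be small---and to absorb the coupling between the two-valued part and the curvature of the average sheet, I would construct a center manifold $\cM$: a $C^{3,\alpha}$ hypersurface approximating $\mathrm{graph}(a)$---in codimension one the average is close to a genuine minimal graph, which should lighten this step---over which $T$ is a two-valued normal graph $N\colon\cM\to\Iq(\cM^{\perp})$ with $\eta\circ N$ vanishing to high order; then on $\cM$ introduce Almgren's frequency function $I_p(r)=\frac{r\int_{B_r}|\nabla N|^2}{\int_{\partial B_r}|N|^2}$ and prove its almost-monotonicity, $I_p(r)\le I_p(s)(1+Cs^{\gamma})$ for $0<r<s$, together with the ensuing $L^2$-doubling of $N$. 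This is the heart of the matter and what I expect to be the principal obstacle: in the minimizing case the monotonicity of $I_p$ comes from comparison with harmonic competitors, which are not available here, so the inequality must be squeezed out of the stationarity identity alone, transplanted to $\cM$ and combined with the center-manifold error estimates---and, since none of those errors carries a favorable sign, the argument will close only along scales that are ``$L^2$-good'' for $T$ at $p$. Assembling this package---two-valued Lipschitz approximation with sharp energy estimates, the $C^{3,\alpha}$ center manifold, the almost-monotone frequency---from stationarity together with $\partial T=0$, multiplicity $2$ and codimension $1$ is where essentially all the work lies, and it is also why the flat two-valued structure can be controlled only at $\norm{T}$-almost every point---which forces the exceptional set below.

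\medskip
\noindent\textbf{The good open set, and conclusion.}
I would then let $\mathcal{O}\subset U$ be obtained by removing the closed, $\norm{T}$-negligible set of points near which the center-manifold/frequency construction cannot be run (typically: flat density-$2$ points at which the excess fails to decay at all scales, so that no center manifold exists). To see $\norm{T}(U\setminus\mathcal{O})=0$: at $\norm{T}$-a.e.\ point the density is an integer and, by a Preiss-type argument built on the frequency bound (equivalently, uniqueness of the blow-up of $N$), the local structure is the tame one; the remaining pieces of $U\setminus\mathcal{O}$ (transverse intersections of two regular sheets, the stratum $\cS^{m-2}$, and the like) have dimension $\le m-1$ and are $\norm{T}$-negligible as well. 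On $\mathcal{O}$: for $p\in F\cap\mathcal{O}$ the bounded, almost-monotone frequency drives Almgren's blow-up scheme to the conclusion that, near $p$, $T$ is a two-valued $C^{1,\alpha}$ minimal graph over $\cM$ as in the second step, so its singular set there---the branch set ($\dim\le m-2$) together with the transverse self-intersection locus ($(m-1)$-rectifiable)---has dimension $\le m-1$; hence $\dim(\Sigma_F\cap\mathcal{O})\le m-1$. Combining with $\dim\cS^{m-1}\le m-1$ yields $\dim(\Sing(T)\cap\mathcal{O})\le m-1$, and finally, since $\norm{T}\ge\Ha^m\mres\spt(T)$,
\[
\Ha^m(\Sing(T))\ \le\ \Ha^m\!\big(\Sing(T)\cap\mathcal{O}\big)+\norm{T}\big(U\setminus\mathcal{O}\big)\ =\ 0,
\]
which is the claim.
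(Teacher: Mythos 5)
Your overall architecture --- reduce to flat density-$2$ points, run a two-valued Lipschitz approximation, center manifold, and frequency function, and excise a $\norm{T}$-null exceptional set --- matches the paper's. But there are two genuine gaps. The first is in your justification that $\norm{T}(U\setminus\mathcal{O})=0$: you appeal to ``a Preiss-type argument built on the frequency bound,'' yet at the points you are removing no center manifold, hence no frequency bound, exists --- by your own definition of $\mathcal{O}$. The exceptional set consists exactly of the density-$2$ points where the \emph{current} blows up to zero while the \emph{varifold} blows up to a double plane; Brakke's example shows this set can have positive measure for varifolds, so ruling it out $\norm{T}$-a.e.\ must use the orientation. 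The paper does this by Lebesgue--Besicovitch differentiation of the vector measure $\vec{T}\,\norm{T}$ with respect to $\norm{T}$: the polar $P(p)$ satisfies $|P(p)|=1$ for $\norm{T}$-a.e.\ $p$, hence at such $p$ the oriented blow-up is $\Theta_T(p)\a{\pi}\neq 0$, so $\bE_{or}$ is small at some scale, and smallness of the oriented excess is an open condition. Some version of this step is indispensable and is missing from your sketch.

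The second gap is in the technical core. You single out the almost-monotonicity of the frequency as the principal obstacle, but that was already resolved in the stationary setting in the authors' earlier work on two-valued stationary graphs; what is genuinely new and needed here is the \emph{strong} Lipschitz approximation with superlinear error $E^{1+\gamma}$, without which neither the center manifold nor the blow-up closes. In the minimizing case this comes from a comparison/competitor argument unavailable here; the paper replaces it with a higher-integrability (Gehring-type) estimate for the excess density, whose reverse-H\"older hypothesis is verified via a topological separation lemma: where $\Theta_T<2$, a boundaryless, codimension-one, multiplicity-two stationary current with small excess splits into two disjoint embedded minimal sheets, each of which admits the Hardt--Simon height bound and the Caccioppoli inequality. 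Without an argument of this kind, your ``two-valued Lipschitz approximation with sharp energy estimates'' remains an assertion rather than a proof.
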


Several remarks are in order.

\begin{remark}[Ambient Manifolds] The same result holds with the appropriate modifications true if $U\subset \Sigma^{m+1}\subset \R^{m+n}$ is a $C^{2,\eps}$ embedded $(m+1)$-dimensional submanifold and $T$ is an $m$-dimensional integral stationary current in $\Sigma\cap U$.
\end{remark}

\begin{remark}[General stationary varifolds]\label{rem:genvar} In the course of the proof we will essentially establish the following fact: if $V$ is a stationary integral varifold in general dimension $m$ and codimension $n$ satisfying \eqref{as.b2-Gehring} below, then the dimension of its singular set is $m-1$. 
However such assumption fails along a sequence of unstable catenoids blowing down to a double plane (see for instance \cite{BDD}), so we decided not to derive our main theorem as a corollary of this more general, but conditional, result. 
\end{remark}

\begin{remark}[Brakke's example] Brakke in \cite{Brakke} constructed an example of a $2$-dimensional integral stationary varifold with bounded mean curvature in $\R^3$, with density at most $2$, such that $\Ha^2(\Sing(T))>0$. We remark that
\begin{itemize}
    \item This example cannot be oriented to produce a two-dimensional integral current with bounded generalized mean curvature because the current would vanish on the collapsed part, which is a set of positive measure for the varifold. Therefore, the current would not have bounded mean curvature;
    \item the metric in $\R^3$ cannot be modified to a $C^{2,\alpha}$ metric for which this example is stationary (otherwise the maximum principle would be violated at the collapsed points).
\end{itemize}
\end{remark}

Prior to our result here, we are aware of only two results on the dimension of the singular set of general stationary varifolds, without any further variational assumptions.
\begin{itemize}
    \item In \cite{HirschSpolaor2024} we prove that the singular set of a $2$-valued Lipschitz graph of dimension $m$ that is stationary for the area is of dimension at most $m-1$. The strategy there is similar to the one employed in this paper, but our assumptions here are more general, in particular we do not exclude the presence of infinite topology neither on the multiplicity $1$ nor on the multiplicity $2$ parts of the current.
    \item In \cite{BMW} the authors prove an Allard type regularity result for stationary varifolds close to multiplicity $2$ plane, under a suitable topological condition, which essentially rules out complicated topology in the multiplicity one parts of the varifold. In some sense their result should be compared to Remark \ref{rem:genvar}: \cite{BMW} proves an Allard's regularity result plus optimal dimensional bound of the singular set in the multiplicity two case, while we could prove only the optimal dimensional bound of the singular set but with no multiplicity assumptions. However, as mentioned above, both of these results are conditional to a topological/analytical assumption that is known to fail in general. There are however situations where they can be verified, and we refer to \cite{BMW} for the interested reader.
\end{itemize}

\subsection{Acknowledgements} L.S. acknowledges the support of the NSF Career Grant DMS 2044954.

\section{Notations and preliminaries}

In the course of the paper we will denote with $\pi$ and $\pi^\perp$ $m$-dimensional planes and their orthogonal complements in $\R^{m+n}$. Moreover $\p_{\pi}$ and $\p_\pi^\perp$ will denote the orthogonal projections of $\R^{m+k}$ to $\pi$ and $\pi^\perp$ respectively. The m-dimensional plane $\R^m \times \{0\}$ with the standard orientation will be denoted with $\pi_0$ and its projections with $\p$ and $\p^\perp$.

We will denote with $\bB_r(q)$ balls in $\R^{m+n}$, with $B_r(q,\pi):=\bB_r(q)\cap (q+\pi)$ and with $\bC_r(q,\pi)$ the cylinder $\{(x+y)\,:\, x \in  B_r(q,\pi), y \in \pi^\perp\}$ (in both cases q is omitted if it is the origin and $\pi$ is omitted if it is clear from the context).

Finally we will often use the notation $\lesssim$ for inequalities that are true up to geometric constants, and specify if the constant have special dependence by writing $\lesssim_k$ if the constant depends on $k$ say.

We will work with stationary integral currents and varifolds, for which we will follow the definitions in \cite{Simon_GMT}. In the next two subsections we recall some basic facts and notations that will be used in the sequel.

\subsection{Stationary varifolds} Given an integral rectifiable varifold $V$ we will denote with 
\begin{itemize}
    \item $\delta V(X)$, $X\in C^\infty_c(U,\R^{m+n})$, the \emph{first variation of $V$} in $U\subset \R^{m+n}$, and we say that $V$ is stationary in $U$ if $\delta V\equiv 0$;
    \item $\Theta_V(p)$ the \emph{density of $V$} at the point $p$;
    \item the \emph{unoriented excess of $V$} with respect to a plane $\pi$ will be denoted by
    \[
    \bE_{un}(V, \bB_{r}(p), \pi) := \frac{1}{\omega_m\,r^m}\int_{\bB_r(p)}|\p_{TV}-\p_\pi|^2\,dV\,,
    \]
    where $|\p_1-\p_2|^2$ denotes the Hilbert-Schmidt norm of the difference between two orthogonal projections. Moreover, we denote with $\bE_{un}(V, \bB_{r}(p)):=\inf_{\pi}\bE_{un}(V, \bB_{r}(p), \pi)$. The same definitions holds when we replace $\B_r(p)$ with $\bC_r(p, \pi)$. 
    \item the \emph{height of $V$} in $\bC_r(p,\pi)$ by 
    \[
    \bh(V, \bC_r(p,\pi)):=\sup\{|\p_\pi^\perp(q)-\p^\perp_\pi(q')|\,:\,q,q'\in \spt V\cap \bC_r(p, \pi)\}\,.
    \]
\end{itemize}

In the sequel we will often work under the following assumptions

\begin{ipotesi}\label{ip.small excess}
Let $V$ be a $m$-dimensional stationary integral varifold in $\bC_{2r}\subset \R^{m+n}$ and suppose that there exists an integer $Q\in \N$ and a nonnegative number $\eps_{un}$ such that
\begin{enumerate}[label=(AV.\arabic*)]
    \item $\frac{\|V\|(\bC_r)}{\omega_m\,r^m}\in (Q-1/2,Q+1/2)$;
    \item $\bE_{un}(V,\bC_r,\pi_0)\leq \eps_{un}$.
\end{enumerate}
\end{ipotesi}

Under these assumptions we have the following result, whose proof can be found in \cite{BDD}.

\begin{theorem}[Recap on varifolds]\label{thm:varifoldold}
    Let $Q\in \N$. There exists $\eps_{un}>0$, depending on $Q$ and $m$, such that if $V$ is as in Assumptions \ref{ip.small excess}, with $r=2$, then the following properties hold.
    \begin{enumerate}[label=(V.\arabic*)]
        \item If $\Theta_V(0)=Q$, then there is a geometric constant constant $C=C(m,n,Q)>0$ such that
        \begin{equation}\label{eq.heightbd}
            \bh(V, \bC_1,\pi_0)\leq C\, \bE_{un}(V,\bC_{2},\pi_0)^{\sfrac12}
        \end{equation}
        \item Given $\lambda>0$, there are a $Q$-valued $|\log\lambda|^{1-1/m}\lambda^{1/m}$-Lipschitz function $f\colon B_1\to \Iq(\R^n)$ and a closed set $K_\lambda$, such that for every $x\in K_\lambda$ if $f(x)= \sum_{i=1}^Q \a{p_i}$, then $\spt(V)\cap(\{x\}\times \R^n )= \bigcup_{i=1}^Q(x,p_i)$ and moreover 
        \[
        |B_1 \setminus K_\lambda |+\|V\|(\bC_1 \setminus (K_\lambda \times \R^n))\leq C \frac1\lambda\,\bE_{un}(V,\bC_4,\pi_0)\,. 
        \]
        Moreover, if $\lambda>0$ is chosen sufficiently small, depending upon $m,n, Q$, then
        \begin{equation}\label{lem:measexc}
            |\|V\|(E\times \R^n)-Q|E||\leq C_0\, \bE_{un}(V,\bC_{2},\pi_0) \text{ for all Borel sets } E \subset B_r \,.
        \end{equation}
    \end{enumerate}
\end{theorem}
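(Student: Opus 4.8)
The statement is a ``recap on varifolds'' that we are told is proven in \cite{BDD}, so the task is to reconstruct the standard proof of the height bound and Lipschitz approximation for stationary integral varifolds with small unoriented excess, following the classical Allard/De Lellis--Spadaro--type scheme adapted to the unoriented setting. First I would record the monotonicity formula for stationary integral varifolds, which under (AV.1)--(AV.2) gives a density ratio upper bound $\|V\|(\bB_\rho(q)) \le C\,\omega_m \rho^m$ uniformly for $q \in \spt V \cap \bC_1$ and $\rho$ up to a fixed scale; combined with Allard's tilt-excess estimate this yields that the ``bad'' set of points where the tangent plane is far from $\pi_0$, or where the density ratio in a small ball is too large, has $\|V\|$-measure controlled by the excess. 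The standard Lipschitz approximation then proceeds pointwise: for $x \in B_1$ in the good set $K_\lambda$, the fiber $\spt V \cap (\{x\}\times\R^n)$ consists of exactly $Q$ points (counted without multiplicity, but with total density $Q$), and the graph of these points over $K_\lambda$ is Lipschitz with the stated constant; one then extends to a $\Iq$-valued Lipschitz function on all of $B_1$ using the Lipschitz extension theorem for $Q$-valued functions (Almgren/De Lellis--Spadaro). The measure estimates $|B_1\setminus K_\lambda|$ and $\|V\|(\bC_1\setminus(K_\lambda\times\R^n))\lesssim \lambda^{-1}\bE_{un}$ follow from a Fubini/maximal-function argument: the set where the ``excess density'' exceeds $\lambda$ has measure at most $\lambda^{-1}$ times the total excess, by a weak-$(1,1)$ bound for the maximal function of the tilt-excess density.

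\textbf{Height bound (V.1).} The inequality \eqref{eq.heightbd} is the sharp $L^\infty$--$L^2$ height estimate. Under the hypothesis $\Theta_V(0)=Q$, the density is exactly $Q$ at the origin, so monotonicity forces the varifold to be close to a single multiplicity-$Q$ plane (rather than splitting into sheets at different heights). The proof is the usual iteration: one shows that on dyadically shrinking cylinders the height decays geometrically relative to $\bE_{un}^{1/2}$, using at each step the Lipschitz approximation together with the Caccioppoli-type inequality and the fact that the average of the approximating $Q$-valued function is almost harmonic (first-variation identity tested against vector fields tangent to $\pi_0$), so its oscillation is controlled by the energy, hence by the excess. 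Summing the geometric series over scales from $1$ down to $0$ and using that the density is exactly $Q$ (so no sheet can escape to a different height) gives the claimed bound $\bh(V,\bC_1,\pi_0) \le C\,\bE_{un}(V,\bC_2,\pi_0)^{1/2}$.

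\textbf{The Lipschitz bound on the approximating function.} For (V.2), the Lipschitz constant $|\log\lambda|^{1-1/m}\lambda^{1/m}$ (rather than a plain power of $\lambda$) comes from the standard improvement in which one does not merely take the graph over the good set $K_\lambda$, but uses the height bound on small cylinders where the excess is $\lesssim\lambda$ to get a pointwise Lipschitz estimate: on such a cylinder of radius $r$, the oscillation of the sheets is $\lesssim (\lambda r^m \cdot r^{-m})^{1/2} r = \lambda^{1/2} r$ after rescaling, and optimizing the covering/stopping scale against the measure of the bad set produces the logarithmic factor. This is carried out in detail in \cite{BDD} (and goes back to the De Lellis--Spadaro Lipschitz approximation with the Almgren-type refinement); I would simply invoke that construction, checking that the unoriented excess plays the same role that the (oriented) cylindrical excess plays for area-minimizing currents, which is legitimate because the first-variation identity — the only input used — holds verbatim for stationary varifolds.

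\textbf{Main obstacle.} The genuinely delicate point, and the one where stationarity (as opposed to minimality) matters, is the height bound (V.1): for minimizing currents one has monotonicity of the excess and a clean compactness argument, whereas here one must run the tilt-excess decay using only the first-variation identity and Allard's estimates, and one must be careful that the ``almost harmonicity'' of the sheet-average is quantified correctly (the error is the full first variation tested against a cutoff of a harmonic competitor, controlled by $\bE_{un}$ times the mass, which is $O(1)$). Everything else — monotonicity, the weak-$(1,1)$ covering estimate, and the $Q$-valued Lipschitz extension — is classical and I would cite it. Since the statement explicitly attributes the result to \cite{BDD}, the ``proof'' here really amounts to recalling these ingredients and pointing to the reference for the quantitative iteration.
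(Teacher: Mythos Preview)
Your proposal is correct and matches the paper's approach: the paper's own proof consists entirely of citations to \cite{BDD} (Theorem~1.9 for (V.1), Theorem~1.11 and Proposition~3.4 for (V.2)), and you arrive at the same conclusion after sketching the underlying ideas. One minor remark: your outline of (V.1) as a dyadic iteration via almost-harmonicity of the sheet average is not the argument the paper has in mind---later in the paper \eqref{eq.heightbd} is explicitly called the ``classical Hardt--Simon estimate'', i.e.\ the direct bound obtained from the remainder term in the monotonicity formula at a point of maximal density, not an excess-decay iteration---but since both you and the paper ultimately defer to \cite{BDD} this is immaterial.
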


\begin{proof}
    The proof of (V.1) can be found in \cite[Theorem 1.9]{BDD}, while (V.2) is obtained combining \cite[Theorem 1.11]{BDD} and \cite[Proposition 3.4]{BDD}.
\end{proof}

\subsection{Stationary currents} Given an integral current $T$ in $U\subset \R^{m+k}$, we will denote with 
\begin{itemize}
    \item $\delta T(X)$, $X\in C^\infty_c(U,\R^{m+k})$ the \emph{first variation of $V$} in $U\subset \R^{m+k}$, and we say that $T$ is stationary in $U$ if $\delta T\equiv 0$;
    \item $\Theta_T(p)$ the \emph{density of $T$} at the point $p$,
    \item the \emph{oriented excess of $T$} with respect to a plane $\pi$ by
    \[
    \bE_{or}(T, \bB_{r}(p), \pi) := \frac{1}{\omega_m\,r^m}\int_{\bB_r(p)}|\vec{T}-\vec{\pi}|^2\,d\|T\|\,,
    \]
    Moreover, we denote with $\bE_{or}(T, \bB_{r}(p)):=\inf_{\pi}\bE_{or}(T, \bB_{r}(p), \pi)$. The same definitions holds when we replace $\B_r(p)$ with $\bC_r(p, \pi)$. 
    \item with $V(T)$, or simply $V$ when clear from the context, the \emph{integral varifold associated to $T$} by dropping the orientation.
    \item given a Lispchitz multivalued function $f$, we will denote with $\bG_f$ the current associated to the graph of $f$. 
\end{itemize}

\section{Reduction of Theorem \ref{thm:main} to an Almgren-De Lellis-Spadaro's type result}

Theorem \ref{thm:main} will follow from the following analogue of Almgren-De Lellis-Spadaro's Theorem \cite{Alm,DS,DS1,DLS_Blowup,DLS_Center,DLS_Currents} for stationary integral hypercurrents of multiplicity at most $2$ with small oriented excess.

\begin{theorem}[Small oriented excess regime]\label{thm:main2} There exists a dimensional constant $\eps_{or}>0$ such that the following holds. If $T$ is an $m$-dimensional stationary integral current in $B_8\subset \R^{m+1}$ such that.
\begin{gather}
    \bE_{or}(T,\bB_4)<\eps_0 \label{eq:smallnessor}\\
    \Theta_T(p)\leq 2\qquad \forall p\in \spt(T) \cap B_4\qquad\text{and}\qquad \spt(\partial T)\cap \B_4=\emptyset\,,
\end{gather}
then there is a closed set $\sing(T)$ such that $T$ is a $C^{3,\eps}$ embedded submanifold in $\B_1 \setminus \sing(T)$ and $\dim(\sing(T))\leq m-1$. 
\end{theorem}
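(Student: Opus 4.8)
The plan is to follow Almgren's blow-up scheme, adapted to the stationary setting under the density $\le 2$ assumption, exactly as the authors announce in the introduction. First I would stratify $\spt(T)$ according to the density function $\Theta_T$, which by upper semicontinuity and the monotonicity formula takes values in $\{1,2\}$ (plus the regular points of density $1$). The points of density $1$ are handled by Allard's regularity theorem: near such points $T$ is a $C^{1,\alpha}$ graph, hence smooth by elliptic regularity (Schauder up to $C^{3,\eps}$ using the $C^{2,\eps}$ ambient data / minimal surface system), so $\sing(T)$ is contained in the set $\{\Theta_T = 2\}$. The real content is therefore to show that the set $\sS$ of density-$2$ singular points has dimension at most $m-1$.

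Near a density-$2$ point, the smallness of the oriented excess \eqref{eq:smallnessor} together with Theorem~\ref{thm:varifoldold} (applied to the associated varifold $V(T)$ with $Q=2$) gives the Lipschitz approximation $f\colon B_1 \to \mathcal{A}_2(\R)$ and the height bound \eqref{eq.heightbd}. The next step is to run a Federer-type dimension reduction / frequency function argument: one constructs the center manifold $\cM$ adapted to $T$ (a $C^{3,\eps}$ graphical approximation of the "average" of the two sheets), writes $T$ as a multivalued graph of a normal field $N$ over $\cM$, and derives the almost-monotonicity of Almgren's frequency function $I(r)$ for $N$. Because $T$ is stationary but not minimizing, the frequency monotonicity will only be \emph{almost}-monotone, with errors controlled by the first-variation identity (this is where the orientability and the $2$-valued structure are essential — the collapsed part carries no current, so the difference of the two sheets satisfies a genuine elliptic equation with controllable right-hand side). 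Blow-up of $N$ at a density-$2$ point then produces a nontrivial homogeneous Dir-minimizing (or appropriate stationary) $2$-valued harmonic function $w\colon \R^m \to \mathcal{A}_2(\R)$ with $\eta\circ w \equiv 0$; the singular set of such a $w$ is the branch set, which by the one-dimensional codomain reduces to $\{\pm g\}$ for a scalar harmonic function $g$, whose nodal set has dimension $\le m-1$.

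The conclusion is then assembled by the standard Almgren argument: if $\sS$ had positive $\Ha^{m-1+\delta}$ measure for some $\delta>0$, a density / Besicovitch argument would produce a sequence of rescalings converging to a blow-up whose singular set still has positive $\Ha^{m-1+\delta}$ measure, contradicting the dimension bound for the singular set of the homogeneous blow-up limit (which, being governed by the nodal set of a harmonic function, is $(m-1)$-rectifiable and in fact $\Ha^{m-1+\delta}$-null). Covering $\spt(T)$ by countably many balls where \eqref{eq:smallnessor} holds after rescaling — which is possible on the open set $\mathcal{O}$ where the excess is small, and $\|T\|(U \setminus \mathcal{O}) = 0$ by Allard's theorem applied at $\|T\|$-a.e.\ point — yields the global statement of Theorem~\ref{thm:main} from Theorem~\ref{thm:main2}.

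I expect the main obstacle to be establishing the almost-monotonicity of the frequency function \emph{without} the minimizing hypothesis. In the Almgren--De Lellis--Spadaro program this rests crucially on comparison competitors; here one must instead exploit the first variation $\delta T = 0$ directly, together with the fact that the two-valued graph over the center manifold decomposes (modulo the center manifold's own PDE) into an average part — which is essentially the center manifold, smooth — and a "separation" part that is a genuine $\mathcal{A}_2$-valued almost-harmonic function because the current structure forbids the collapsed configuration. Controlling the error terms in the inner/outer variation identities for $N$ by the excess, and closing the estimates against the center-manifold construction (the "splitting before tilting" and the Whitney-type decomposition), is the technical heart of the argument and is where the restriction to multiplicity $2$ and hypersurface codimension is genuinely used.
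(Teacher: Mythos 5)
There is a genuine gap: your outline reproduces the high-level Almgren--De Lellis--Spadaro architecture (which the paper itself announces in Section 3), but it omits the one step that actually requires a new idea in the stationary setting, and it mislocates the difficulty. You place the obstacle at the frequency-function monotonicity; in fact, once the \emph{strong} Lipschitz approximation with superlinear errors (Theorem \ref{thm:almstr}) is available, the center manifold and the frequency blow-up go through essentially verbatim as in \cite{HirschSpolaor2024}. The problem is obtaining Theorem \ref{thm:almstr} at all. In the area-minimizing theory the $E^{1+\gamma}$ gain over the naive Lipschitz approximation comes from a comparison with a harmonic competitor, which is unavailable here; your proposal to ``exploit $\delta T=0$ directly'' at the level of the inner/outer variations of the normal approximation does not supply a substitute, because without the superlinear error estimates the variational identities for $N$ cannot be closed in the first place.

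The paper's replacement is a higher-integrability (reverse-H\"older) estimate for the unoriented excess density, proved in two steps that your proposal does not contain. First, the Separation Lemma \ref{lem:topo}: if $\Theta_T<2$ everywhere in a cylinder with small oriented excess, then orientability forces $T$ to split as $\a{M_1}+\a{M_2}$ with $M_1,M_2$ \emph{disjoint smooth} multiplicity-one minimal hypersurfaces (the proof fills $T$ with a top-dimensional current $E$ and runs a degree/parity argument along a path joining the two sheets, plus the connectedness Lemma \ref{lem:connectedness}). Consequently, on every scale one is in one of two cases --- either there is a point of maximal density $2$, or the current is two disjoint multiplicity-one sheets each of which consists entirely of points of maximal density --- and in both cases the Hardt--Simon height bound \eqref{eq.heightbd} combined with the Caccioppoli inequality yields the reverse-H\"older estimate \eqref{as.b2-Gehring}, $\nu^2(B)\lesssim(\nu^2(2B)/|2B|)^{1/2}\nu(2B)$. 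Second, a modified Gehring lemma (Proposition \ref{prop:Gehring}, nonstandard because the projected measure $\mu$ is not a priori doubling and the hypothesis holds only below a threshold) upgrades this to $\int\min\{(M_\mu\nu^2)^\delta,\eps_{un}^\delta\}\,d\nu^2\lesssim \nu^2(B_2)^{1+\delta}$, which is exactly the weak-$L^1$ improvement needed to run the Lipschitz approximation at scale $\lambda=E^{\gamma}$ and obtain \eqref{e:lipbd}--\eqref{e:areaenergy}. This is where multiplicity $2$, codimension $1$, and orientability are genuinely used --- not, as you suggest, primarily in the frequency-function estimates. Without these two ingredients your argument cannot start.
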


We should note that Theorem \ref{thm:main2}, and in fact a stronger conclusion, could be achieved by combining Lemma \ref{lem:topo} with the main result of \cite{BMW}, however we chose to prove it for two reasons:
\begin{itemize}
    \item our method is significant different then the one in \cite{BMW},
    \item our method shows the key role of the higher integrability assumption \eqref{as.b2-Gehring} in the De Lellis-Spadaro's proof of Almgren's theorem.
\end{itemize} 

In this section we will show how to deduce Theorem \ref{thm:main} assuming that Theorem \ref{thm:main2} is true. We will then describe the strategy to prove Theorem \ref{thm:main2} which will be carried out over the next four sections.

\subsection{Proof of Theorem \ref{thm:main} from Theorem \ref{thm:main2}}

To go from Theorem \ref{thm:main2} to Theorem \ref{thm:main} it is sufficient to observe the following two facts:

\begin{enumerate}
    \item the condition \eqref{eq:smallnessor} is an open condition,
    \item the set of points 
    \[ \mathcal B_{un}:=\{p \in \spt(T) \colon \Theta_{T}(p)=2 \text{ and there is a sequence $r_k\downarrow 0$ s.t. } T_{p,r_k} \rightharpoonup 0\}\]
    is of $\mathcal H^m$ measure $0$.
\end{enumerate}
To justify the second point, we observe that it follows immediately from Lebesgue's differentiation theorem. Indeed recall that the monotonicity formula implies that $\norm{T} \le 2 \mathcal{H}^n$. Moreover the ``polar''
\[ 
P(p):=\frac{d( \vec{T}_p  \norm{T})}{d\norm{T}}= \lim_{r\downarrow 0} \frac{\int_{\bB_r(p)} \vec{T}_p \, d\norm{T}}{\norm{T}(\bB_r(p))}
\]
exists with $|P(p)|=1$ for $\norm{T}$ a.e. $p$. At these points the support of the tangent cone must therefore be a plane $\pi$ spanned by the polar, and the blow-up current cannot be $0$ since 
\[ 
T_{p,r} = \frac{\|T\|(\B_r(p))}{\omega_mr^m} \frac{ {\eta_{p,r}}_\sharp T}{\norm{T}(\bB_r(x))}\rightharpoonup \Theta_T(p)\a{\pi} \,.
\]
\qed

\subsection{Strategy of the proof of Theorem \ref{thm:main2}} It only remains to prove Theorem \ref{thm:main}.We will proceed by contradiction, that is we will make the following assumption:

\begin{ipotesi}[Contradiction Assumptions]\label{as:workingass}
    Let $(T_j)_j$ be a sequence of $m$-dimensional integral stationary currents such that 
    \begin{gather}\label{eq.stratifications01}
\lim_{k\to \infty} \bE_{or}(T_{0,r_k}, \bB_{6\sqrt{m}})=0\,,\\\label{eq.stratifications02}
\lim_{k\to \infty}\Ha_\infty^{m-1+\alpha}(D_2(T_{0,r_k})\cap \bB_1)>\eta>0\,,\\\label{eq.stratifications03}
\Ha^m\left(\bB_1\cap \spt (T_{0,r_k})\setminus D_2(T_{0,r_k}) \right)>0\,,
\end{gather}
where $D_2(T)$ denotes the points of density $2$ of $T$.
\end{ipotesi}

We note that these are the same assumptions as in \cite{DS1}, with stationary replacing minimizing, and in \cite{HirschSpolaor2024}, dropping the Lipschitz graphicality assumption. The rest of the paper will be devoted to deducing Theorem \ref{thm:main2} by contradiction under Assumptions \ref{as:workingass}. This will be achieved following the strategy of De Lellis-Spadaro in \cite{DS1,DLS_Center,DLS_Blowup} as implemented in \cite{HirschSpolaor2024}, that is 
\begin{itemize}
    \item[Step 1:] constructing the so-called \emph{strong Almgren approximation}, that is a multivalued Lipschitz approximation to $T$ with errors that are superlinear in the excess (see Theorem \ref{thm:almstr});
    \item[Step 2:] constructing a center manifold and a normal approximation to $T$ from the center manifold;
    \item[Step 3:] performing a frequency blow-up argument.
\end{itemize}
Of these steps, the only one that requires modifications from our previous work \cite{HirschSpolaor2024} is Step 1, since we are abandoning the Lipschitz graphicality assumptions. Our proof of Theorem \ref{thm:almstr} is obtained by deducing higher integrability of the excess density from topological properties of the current (cf. Lemma \ref{lem:topo}) and from a modified Gehring's Lemma (cf. Proposition \ref{prop:Gehring}).

\section{A Gehring's type result for the excess measure}\label{ss:Gehring}

The goal of this section is to prove Proposition \ref{prop:Gehring}, that is a higher integrability result for the unoriented excess density under the natural Assumptions in \ref{ip.small excess} together with \ref{as.b2-Gehring}. 

We remark that this is not the standard the Gehring's lemma since the reference measure is a priori not doubling and the Gehring's assumption \ref{as.b2-Gehring} holds only below a certain threshold. 

We also notice that in this section we are working with stationary integral varifolds under no other restriction (on the dimension, codimension or density), and so, as mentioned in Remark \ref{rem:genvar}, if \eqref{as.b2-Gehring} held in this generality, then we could prove that the singular set of any $m$-dimensional stationary integral varifold is of dimension $m-1$.

We introduce the following notations. Setting $\be:=\sqrt{\frac12|\p - \p_0|^2}$ \footnote{If one considers a stationary varifold in an $(m+1)$-dimensional, sufficiently smooth, Riemannian manifold isometrically embedded in $\R^n$, then one replaces the term be by $\be_H=\be + \bA^2$, where $\bA$ is a bound on the second fundamental form of $\Sigma$.} , we define the following measures on $\R^m$:
\begin{enumerate}
    \item[m1)] $\mu(E)=({\p_0}_\sharp \norm{V})(E\cap B_2)= \int_{\p_0^{-1}(E\cap B_2)}\, d\norm{V}$;\\
    \item[m2)] $\nu^2(E) = \int_{\p_0^{-1}(E\cap B_2)} \be^2\, d\norm{V}$; \\
    \item[m3)] $\nu(E) = \int_{\p_0^{-1}(E\cap B_2)} \be\, d\norm{V}$;
\end{enumerate}
for any Borel set $E\subset \R^m$. Moreover, given any two Radon measures $\mu, \sigma$ on $\R^m$ we define the un-centered maximal function by 
\[
M_\mu\sigma(x)=\sup_{x\in B} \frac{\sigma(B)}{\mu(B)}\,.
\]
It is easy to check that $x\mapsto M_\mu\sigma(x)$ is lower semi-continuous.

\begin{proposition}[Gehring's type lemma]\label{prop:Gehring}
There exists $\eps_{un}=\eps_{un}(m,n,Q)>0$ such that if $V$ satisfies Assumptions \ref{ip.small excess} in the cylinder $\bC_2$ then the following holds. Assume that for any ball $4B\subset B_2$ such that $V$ satisfies Assumption \ref{ip.small excess} in $\p^{-1}(2B)$, there exists a constant $C=C(m,n,Q)$, such that
\begin{equation}
  \label{as.b2-Gehring} \nu^2(B)\leq C \left(\frac{\nu^2(2B)}{|2B|}\right)^{\sfrac12} \nu(2B)\,.
\end{equation}
Then there are constants $\delta,C>0$ depending on $m,n,Q$, such that, setting $\phi(t)=t^\delta$ one has
\begin{equation}\label{eq:higher intergrability}
    \int_{B_{\frac14}} \min\{\phi(M_\mu\nu^2),\phi(\epsilon_{un})\}\, d\nu^2\leq C\, \phi(\nu^2(B_2))\, \nu^2(B_2)
\end{equation}
\end{proposition}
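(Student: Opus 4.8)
\textbf{Proof plan for Proposition \ref{prop:Gehring}.}

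The plan is to run the classical Gehring reverse-H\"older iteration scheme, but adapted to the non-doubling reference measure $\mu$ and to the fact that the reverse H\"older inequality \eqref{as.b2-Gehring} is only available at scales where Assumptions \ref{ip.small excess} hold. First I would set up a good Calder\'on--Zygmund-type stopping time decomposition relative to the measure $\mu$: fix a level $\lambda > \nu^2(B_2)/|B_2|$ (suitably comparable to $\Xint-_{B_1}\ d\nu^2$ times a large constant) and consider the super-level set $\{M_\mu \nu^2 > \lambda\}$ inside $B_{1/2}$. By the lower semicontinuity of $M_\mu\nu^2$ this is open, and by a Vitali-type covering argument adapted to $\mu$ (here one must be careful, since $\mu$ is not doubling, so one uses the basic $5r$-covering lemma rather than a dyadic decomposition) one extracts a countable family of balls $\{B_i\}$ with controlled overlap such that on each $B_i$ one has $\nu^2(5B_i)/\mu(5B_i) \sim \lambda$ while the concentric balls at larger radius already sit below $\lambda$. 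The key point is that at these selected scales the density ratio is small (because $\lambda$ will be taken $\le \eps_{un}$ up to constants and $\mu(5B_i)\sim Q|5B_i|$ by \eqref{lem:measexc}), hence $V$ satisfies Assumptions \ref{ip.small excess} on the corresponding cylinders, so \eqref{as.b2-Gehring} is applicable on each $B_i$.

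Next I would apply the reverse H\"older inequality \eqref{as.b2-Gehring} on each stopping ball: writing $\nu^2(2B_i) \le C (\nu^2(4B_i)/|4B_i|)^{1/2}\nu(4B_i)$ and using Cauchy--Schwarz to bound $\nu(4B_i) \le (\nu^2(4B_i))^{1/2}\,\mu(4B_i)^{1/2} \sim (\nu^2(4B_i))^{1/2}|4B_i|^{1/2}$, one obtains a self-improving estimate of the form $\nu^2(2B_i) \lesssim \lambda^{1/2}\,(\text{tail terms})$, which after combining with the stopping condition gives that the portion of $\nu^2$-mass on $B_i$ sitting at level $>\lambda$ is controlled by a small-in-$\lambda$ factor times the mass at a comparably lower level. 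Summing over $i$ using the bounded overlap and \eqref{lem:measexc} to pass between $\mu$-measures and Lebesgue measures, this yields the standard good-$\lambda$ / level-set inequality
\[
\nu^2\big(\{M_\mu\nu^2 > A\lambda\}\cap B_{1/4}\big) \le \frac{C}{A^{1+\sigma}}\,\nu^2\big(\{M_\mu\nu^2 > \lambda\}\cap B_{1/2}\big)
\]
for some $\sigma>0$, valid for all $\lambda$ in the window $\big[\,c\,\nu^2(B_2)/|B_2|,\ \eps_{un}\,\big]$. Multiplying by $\lambda^{\delta-1}$ for a small $\delta < \sigma$ and integrating over this window (a truncated layer-cake computation — the truncation at $\eps_{un}$ is exactly why the $\min$ with $\phi(\eps_{un})$ appears on the left of \eqref{eq:higher intergrability}, and why no global doubling is needed) produces \eqref{eq:higher intergrability}, with the right-hand side $\phi(\nu^2(B_2))\,\nu^2(B_2)$ coming from the contribution of the bottom of the window where $M_\mu\nu^2 \lesssim \nu^2(B_2)/|B_2| \lesssim \nu^2(B_2)$ on $B_{1/4}$ (using $|B_{1/4}|\sim 1$) together with the trivial bound $\nu^2(B_{1/4}) \le \nu^2(B_2)$.

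The main obstacle I anticipate is the non-doubling nature of $\mu$: in the usual Gehring argument one freely compares $\mu$ (or Lebesgue measure) on a ball and its dilate, which fails here a priori. The fix is to exploit \eqref{lem:measexc} from Theorem \ref{thm:varifoldold}, which says $\mu$ is uniformly comparable to $Q\,|\cdot|$ \emph{precisely on the balls where Assumptions \ref{ip.small excess} hold} — and the stopping time is designed so that all the balls we ever use are of this type. Thus $\mu$ is \emph{effectively} doubling along the whole argument, with the caveat that this forces every estimate to be localized below the threshold $\eps_{un}$; keeping track of this threshold everywhere, and checking that the stopping balls genuinely inherit Assumptions \ref{ip.small excess} (both the mass ratio condition (AV.1) via \eqref{lem:measexc}, and the small unoriented excess (AV.2) via the smallness of $\lambda$ and the definition of $\nu^2$ as the $\be^2$-weighted mass), is the technical heart of the proof. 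A secondary nuisance is the covering argument itself: without dyadic cubes one uses the Besicovitch or $5r$-covering lemma, and must verify the overlap bounds are genuinely dimensional; this is routine but must be done with care so that the constant $C$ in \eqref{eq:higher intergrability} depends only on $m,n,Q$.
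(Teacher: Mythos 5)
Your overall skeleton is right and matches the paper's: a stopping-time selection of balls adapted to the non-doubling measure $\mu$, using \eqref{lem:measexc} (via a backwards induction over dyadic dilates, the paper's Step~1) to make $\mu$ comparable to $Q|\cdot|$ on exactly those balls where Assumptions \ref{ip.small excess} propagate, the truncation at $\eps_{un}$, Besicovitch instead of dyadic cubes, and a layer-cake integration. However, the analytic core of your plan has two genuine gaps.

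First, you propose to estimate $\nu(4B_i)\le (\nu^2(4B_i))^{1/2}\mu(4B_i)^{1/2}$ by Cauchy--Schwarz and feed this back into \eqref{as.b2-Gehring}. This trivializes the hypothesis: it returns $\nu^2(2B_i)\lesssim \nu^2(4B_i)$ and destroys the gain, since \eqref{as.b2-Gehring} is precisely a reverse Cauchy--Schwarz inequality and undoing it recovers a tautology. The correct move (the paper's Step~2) is to split $\nu(2B)=\nu(2B\cap\{\be>\beta\lambda\})+\nu(2B\cap\{\be\le\beta\lambda\})$, absorb the low part $\beta\lambda\,\mu(2B)$ into the left-hand side using the stopping condition $\nu^2(B)>\lambda^2\mu(B)$ and \eqref{eq.stopping1}, and retain only $\lambda\,\nu(2B\cap\{\be>\beta\lambda\})$.

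Second, the resulting level-set estimate is \emph{not} a good-$\lambda$ inequality with a power gain $A^{-(1+\sigma)}$ in a single measure; it is the reverse weak-type estimate
\[
\nu^2\bigl(\{M_\mu\nu^2>\lambda^2\}\cap B_s\bigr)\lesssim \lambda\,\nu\bigl(\{\be>\beta\lambda\}\cap B_t\bigr),
\]
which couples the quadratic measure $\nu^2$ on the left with the linear measure $\nu$ on the right and carries no decay in the level whatsoever (indeed, bounding the right-hand side back by $\beta^{-1}\nu^2(\{\be>\beta\lambda\})$ gives a trivial statement). A single-measure good-$\lambda$ inequality of the form you write would, upon iteration, yield $\int (M_\mu\nu^2)^{\delta}\,d\nu^2<\infty$ for all $\delta<1+\sigma/2$, i.e.\ $\be\in L^{4+}$ locally --- far stronger than Gehring-type self-improvement and not derivable from one application of \eqref{as.b2-Gehring}. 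Because the one-step estimate has no intrinsic decay, the final integration over levels does not close by itself: after multiplying by $\phi'$ and applying Fubini one obtains
\[
\sigma(s)\le \frac{C\delta}{2\delta+1}\,\sigma(t)+\frac{C}{(t-s)^{\delta m}}\,\phi(\nu^2(B_2))\,\nu^2(B_2)\,,\qquad 0<s<t<\tfrac12\,,
\]
with the unknown quantity on both sides, and the classical reabsorption iteration over radii (the paper's Step~4, e.g.\ \cite[Lemma 7.3]{Giusti2003}) is indispensable; your plan omits it. Relatedly, the lower threshold of the $\lambda$-window must be taken of the form $\lambda_0^2(s,t)=C\nu^2(B_2)/(t-s)^m$, depending on the pair of radii, so that the doubled stopping balls remain inside $B_t$; a threshold depending only on $\nu^2(B_2)/|B_2|$ is not compatible with the reabsorption.
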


\begin{proof}[Proof of Proposition \ref{prop:Gehring}]
    The proof will be divided into the following 4 steps.
    \begin{enumerate}
        \item[Step 1:] Suppose $B \subset B_1$ and $n \in \N_0$ such that $|2^nB|\le |B_{\frac12}|\le |2^{n+1}B|$ 
and $\frac{\nu^2(2^kB)}{\mu(2^kB)} \le \delta_1\epsilon_{un}$ for all $k=0, \dotsc, n$ then 
\begin{equation}\label{eq.comparison between excesses in Gehring}
\frac{\Bigl|\mu(2^{k-1}B)-Q|2^{k-1}B|\Bigr|}{|2^kB|}+\frac{\nu^2(2^kB)}{|2^kB|} \lesssim \frac{\nu^2(2^kB)}{\mu(2^kB)}\,.
\end{equation}
        \item[Step 2:] Selection of ``good stopping'' balls.
        \item[Step 3:] One step Gehring's lemma.
        \item[Step 4:] Classical reabsorption. 
\end{enumerate}


\medskip

\noindent\emph{Step 1:}
We prove it by backwards induction on $k$. 

For $k=n$ we have $2^{n+1}B \subset B_1(x)$ for some $x \in B_1$, thus we can apply \eqref{lem:measexc} in $2^{n+1}B$ and deduce that 
$|\mu(2^nB) -Q|2^nB|| \lesssim \epsilon_{un} |2^nB|$. This implies that $\frac{\nu^2(2^nB)}{|2^nB|} \approx \frac{\nu^2(2^nB)}{\mu(2^nB)}$ and moreover that $V$ in $\p_0^{-1}(2^nB)$ satisfies the Assumptions \ref{ip.small excess}, so that we can apply \eqref{lem:measexc} in $2^nB$ and we conclude that that the first term of \eqref{eq.comparison between excesses in Gehring} is bounded as desired.

Now suppose \eqref{eq.comparison between excesses in Gehring} holds for $l\ge k+1$. Since \eqref{eq.comparison between excesses in Gehring} precisely states that $V$ satisfies the assumptions to apply \eqref{lem:measexc} on scale $2^{k+1}B$, meaning inside the cylinder $\p_0^{-1}(2^kB)$, we can apply it and deduce that \eqref{eq.comparison between excesses in Gehring} holds as well on scale $2^kB$, which concludes the induction step and the argument.

\medskip

\noindent\emph{Step 2:} 
Let us fix $0<s<t<\frac12$ and 
\[
\lambda^2_0(s,t)=C \frac{\nu^2(B_2)}{(t-s)^m}.
\]
Given any $\lambda^2_0(s,t)\le \lambda^2 \le \delta_1 \epsilon_{un}$, we consider the set 
\[ 
E=\{ x\colon M_{\mu}\nu^2(x) > \lambda^2\}\cap B_s\,.
\]
Hence for each $x\in E$ there is a ball $x \in B$ such that $\frac{\nu^2(B)}{\mu(B)} >\lambda^2$. Noting that for $n \in \Z$ such that $|2^nB|\le |B_{\frac12}| < |2^{n+1}B|$ we have $\frac{\nu^2(2^{n}B)}{\mu(2^nB)} \lesssim \nu^2(B_2)<\lambda^2$, we can pass to $2^{l}B$, $l\ge 0$  if necessary, so that for such a ball we have $\frac{\nu^2(2^kB)}{\mu(2^kB)} \le\lambda^2$ for all $0\le k \le n$. In particular for each such ``stopping'' ball the assumptions of Step 1 are satisfied. 
Furthermore we conclude that for such a stopping ball we have 
\[\lambda_0^2(s,t)Q|B|\le \lambda^2 Q|B| \le 2\lambda^2 \mu(B) < \nu^2(B) \le \nu^2(B_2)\,. \]
Hence, by the definition of $\lambda_0$, $|B|\le \delta_2 (t-s)^m$, implying that $2B \subset B_t$ for all such ``stopping'' balls. 
Furthermore, by  the choice of balls we can use \eqref{eq.comparison between excesses in Gehring} to get
\begin{equation}\label{eq.stopping1}
Q-\frac12 \le \frac{\mu(B)}{|B|}
\quad\text{and}\quad
\frac{\mu(2B)}{|2B|} \le Q+\frac12\,.
\end{equation}

Therefore using \eqref{as.b2-Gehring}, for any such a stopping ball we have for a sufficient small dimensional  $\beta=\beta(m,n,Q)$
\begin{align*}
    &\lambda^2 \mu(B) \le \nu^2(B) \lesssim \left(\frac{\nu^2(2B)}{\mu(2B)}\frac{\mu(2B)}{|2B|}\right)^\frac12 \nu(2B) \\
    &\quad \lesssim \left(\lambda^2 2Q\right)^{\frac12} \left(\nu\left(2B\cap \left\{\be > \beta \lambda  \right\} \right) + \beta \lambda \frac{\mu(2B)}{{\mu(B)}} \mu(B) \right)\\
    &\quad \le C \lambda \nu\left(2B\cap \left\{\be > \beta \lambda  \right\}\right) + \frac{\lambda^2}{2} \mu(B)\,.
\end{align*}
In summary, using \eqref{eq.stopping1} again, we found for such a ``stopping'' ball  
\begin{equation}\label{eq:stopping2}
    \lambda\, \mu(2B) \le C \, \nu\left(2B\cap \left\{\be> \beta \lambda  \right\}\right)\,.
\end{equation}

Now we apply the Besicovitch covering theorem to the collection $\{ 2B\}$  giving us families $\mathcal{B}_i, i=1,\dots, N$ of disjoint balls covering $E$. Hence we deduce  
\begin{align*}
    \nu^2(E) &\le \sum_{i} \sum_{2B\in \mathcal{B}_i} \nu^2(2B) \le \sum_{i} \sum_{2B\in \mathcal{B}_i} \lambda^2 \mu(2B) \\&
    \lesssim \sum_i\sum_{2B\in \mathcal{B}_i} \lambda \nu\left(2B\cap \left\{\be > \beta \lambda  \right\}\right) \\&\lesssim \lambda \nu\left(B_t\cap \left\{\be > \beta \lambda  \right\}\right)\,.
\end{align*}

Using this in the first inequality below, we obtain for $\phi(t)=t^\delta$ for some $\delta>0$ small chosen later, writing $\lambda_0$ instead of $\lambda_0(s,t)$  we have
\begin{align*}
    &\int_{\{M_{\mu}\nu^2 \ge \lambda^2_0 \}\cap B_s} (\min\{ \phi(M_\mu\nu^2), \phi(\epsilon_{un})\}-\phi(\lambda_0^2) )\, d\nu^2 \\
    &= \int_{\lambda_0^2}^{\epsilon_{un}} \phi'(\tau) \,\nu^2\left(\{M_{\mu}\nu^2 \ge \tau \}\cap B_s \right) \, d\tau = \int_{\lambda_0}^{\epsilon^{\frac12}_{un}} 2\tau \phi'(\tau^2) \,\nu^2\left(\{M_{\mu}\nu^2 \ge \tau^2 \}\cap B_s \right) \, d\tau\\
    &\lesssim \int_{\lambda_0}^{\epsilon_{un}^{\frac12}} 2\tau^2 \phi'(\tau^2) \,\nu\left(\left\{\be > \beta \tau  \right\}\cap B_t\right) \, d\tau\\
    &\lesssim \frac{2\delta}{\delta+1} \int_{\{\be>\beta \lambda_0\}\cap B_t} \min\left\{\phi\left(\frac{\be^2(x)}{\beta^2}\right) \frac{\be(x)}{\beta}, \phi(\epsilon_{un})\epsilon^{\frac12}_{un}\right\}\\
    & \lesssim \frac{2\delta}{2\delta+1} \int_{\{\be_{un}>\beta \lambda_0\}\cap B_t} \min\left\{\phi(\be^2(x))\be(x), \phi(\epsilon_{un})\epsilon^{\frac12}_{un}\right\}\, d\nu,
\end{align*}
where we used that $\phi'(\tau^2)\tau^2=\frac{\delta}{2\delta+1}\frac{d}{d\tau} (\phi(\tau^2)\tau)$ and the homogeneity of $\phi$. 
Now we note that $\be\, d\nu = d\nu^2$ and $\be^2(x) \le M_{\mu}\nu^2$, and we observe that if $\phi(\eps_{un})\eps_{un}^{\frac12}\leq \phi(\be^2(x))\be(x)$ then $\eps_{un}^{\frac12}\leq\be$,  so that the above implies
\begin{align*}
    &\int_{\{M_{\mu}\nu^2 \ge \lambda^2_0 \}\cap B_s} (\min\{ \phi(M_\mu\nu^2), \phi(\epsilon_{un})\}-\phi(\lambda_0^2) )\, d\nu^2 \\&\le \frac{C\delta}{2\delta+1} \int_{\{M_{\mu}\nu^2 \ge \beta^2\lambda^2_0 \}\cap B_t} \min\{ \phi(M_\mu\nu^2), \phi(\epsilon_{un})\}\, d\nu^2\,.
\end{align*}
Adding to the above inequality
\[
\int_{\{M_{\mu}\nu^2 \le \lambda^2_0 \}\cap B_s} \min\{ \phi(M_\mu\nu^2), \phi(\epsilon_{un})\}\, d\nu^2 \leq \min\{ \phi(\lambda_0^2), \phi(\epsilon_{un})\}\,\nu^2(B_2)
\]
and introducing the quantity
\[\sigma(s)=\int_{ B_s} \min\{ \phi(M_\mu\nu^2), \phi(\epsilon_{un})\}\, d\nu^2 \]
we found, recalling the definition of $\lambda_0$, that
\[\sigma(s) \le \frac{C\delta}{2\delta+1} \sigma(t) + \frac{C}{ (t-s)^{\delta m}} \phi(\nu^2(B_2))\nu^2(B_2) \quad \forall 0<s<t<\frac12\,.\]
Hence if we choose $\frac{C\delta}{2\delta+1}=\hat{\delta}<1$ the classical reabsorption iteration, see for instance \cite[Lemma 7.3]{Giusti2003}, \cite[Lemma 8.18]{GiaquintaMartinazzi2012}, provides the inequality \eqref{eq:higher intergrability} since its equivalent to 
\[\sigma\left(\frac14\right) \le C\, \nu^2(B_2)^{1+\delta}\]
\end{proof}

\section{On the connectedness of the regular set for multiplicity one stationary varifolds}

In this section we will prove a preliminary topological property of multiplicity one stationary varifolds. Though well known, we chose to recall it for the reader's convenience.

We will denote with ${\bf Y}^m:={\bf Y}\times \R^{m-1}$ the stationary cone supported on the triple junction times $\R^{m-1}$. Furthermore, given an integral stationary varifold $V$ and a point $x\in \spt (V)$, we will denote with ${\rm Tan}(x, V)$ the collection of tangent cones to $V$ at $x$.

\begin{lemma}\label{lem:connectedness}
    Let $V$ be an $m$-dimensional varifold inside an open $(m+1)$-dimensional smooth Riemannian manifold $\Sigma$, such that the following conditions are satisfied
    \begin{enumerate}
        \item\label{as.bounded curvature} $V$ has bounded mean curvature in an open set $U$;
        \item\label{as.density bound} $\Theta^m_V(x)<2$ and ${\bf Y^m}\notin {\rm Tan}(x,V)$, for all $x \in \spt(V)$.
    \end{enumerate}
 Then the \emph{connected} components of the support $\spt(V)$ are the same as the \emph{path connected} components of the regular part of $V$. Moreover
 \[
 \dim(\sing(V))\leq m-3\,.\footnote{We remark that the dimensional bound is not sufficient to conclude the topological part of the lemma.}
 \].
\end{lemma}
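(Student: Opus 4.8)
The plan is to prove Lemma~\ref{lem:connectedness} in two stages: first the regularity statement $\dim(\sing(V))\le m-3$, and then the topological identification of connected components of $\spt(V)$ with path-connected components of $\reg(V)$.

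\emph{Step 1: stratification and classification of tangent cones.} Since $V$ has bounded mean curvature, Allard's regularity theorem applies at every point of density $1$, so $\sing(V)\subset \{x:\Theta^m_V(x)\ge 1+\delta_0\}$ for some dimensional $\delta_0>0$; combined with the hypothesis $\Theta^m_V(x)<2$, the singular set consists only of points of density strictly between $1$ and $2$. At such a point $x$, every tangent cone $C\in{\rm Tan}(x,V)$ is a stationary integral cone with bounded mean curvature (the mean curvature scales away in the blow-up), hence is a genuine stationary cone with density $<2$. I would invoke the classification of stationary integral cones of density less than $2$: such a cone is either a multiplicity-one plane (excluded at singular points by Allard), a half-plane-type or book-type configuration (excluded because $\partial V=\emptyset$ and $V$ is a varifold without boundary locally, or more precisely because such cones have density profiles incompatible with the monotonicity at an interior point), or is of the form ${\bf Y}\times\R^{m-1}$ — which is excluded by hypothesis~\eqref{as.density bound}. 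The remaining possibility is a cone $C = C_0\times\R^{m-j}$ where $C_0$ is a stationary cone in $\R^{j+1}$ with an isolated singularity at the origin and density $<2$; the smallest dimension in which such a $C_0$ exists with no ${\bf Y}$-type cross section is $j=3$ (in $\R^3$ the only density-$<2$ stationary cones through the origin with isolated singularity are the ${\bf Y}$-cones, which are excluded, so one needs $j\ge 3$). By Almgren's stratification (or the Naber--Valtorta refinement), the set of points whose tangent cones split off at most $m-j$ Euclidean factors has dimension at most $m-j$, giving $\dim(\sing(V))\le m-3$.

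\emph{Step 2: from regularity to topology.} Given Step 1, $\reg(V)$ is an embedded $C^{1,\alpha}$ (indeed smooth, by Schauder, away from the singular set) hypersurface in $\Sigma$, and $\sing(V)$ is closed of Hausdorff dimension $\le m-3$, hence of codimension $\ge 3$ \emph{inside the $m$-dimensional set} $\spt(V)$. The key general fact is: if $M$ is a connected $m$-dimensional set which is a smooth manifold away from a closed subset $S$ with $\mathcal H^{m-1}(S)=0$ (here even $\dim S\le m-3$, so a fortiori $\mathcal H^{m-2}(S)=0$), then $M\setminus S$ is connected, and in fact path-connected since it is a connected manifold. Concretely I would argue: (i) $\reg(V)$ is locally path-connected (it is a manifold), so its connected and path-connected components coincide; (ii) each connected component of $\reg(V)$ has closure equal to a union of connected components of $\spt(V)$, because $\spt(V) = \overline{\reg(V)}$ (the singular set, having positive-codimension, cannot carry a relatively open piece of $\spt(V)$ — this uses again Allard/constancy-type reasoning that $\spt(V)$ has no lower-dimensional isolated pieces); (iii) conversely a connected component $\Omega$ of $\spt(V)$ cannot be split by removing $\sing(V)$: if $\Omega\setminus\sing(V)=A\sqcup B$ with $A,B$ relatively open and nonempty, then since $\dim\sing(V)\le m-3$ we can connect a point of $A$ to a point of $B$ by a path in $\Omega$ that, after a small generic perturbation (transversality: a path is $1$-dimensional, $\sing(V)$ has dimension $\le m-3\le m-2$, but one must be careful since the path lives in $\spt(V)$, not in $\Sigma$ — the right tool is that $\reg(V)$ near any point of $\sing(V)$, minus the singular set, is still connected because the singular set there is locally of the form $(\text{cone point})\times\R^{m-3}$ complement, which is connected), avoids $\sing(V)$. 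This yields the contradiction.

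\emph{Main obstacle.} The delicate point is Step~1's input: the \emph{classification of stationary integral cones of density less than $2$ with no ${\bf Y}\times\R^{m-1}$ in their tangent structure}, and in particular ruling out all cones that split off only $m-1$ or $m-2$ Euclidean factors. One must establish that the only $2$-dimensional stationary cone of density $<2$ with an isolated singularity is the ${\bf Y}$, and that no genuine $3$-dimensional stationary cone of density $<2$ has a cross-section equal to ${\bf Y}$ forced by the hypothesis — equivalently, that the first ``allowed'' singular cone splits off at most $m-3$ factors. This rests on work of Allard--Almgren and Simon on cones with small density, together with the $\eta$-regularity near the ${\bf Y}$-cone; assembling it carefully (and handling the Riemannian ambient $\Sigma$, where tangent cones are still Euclidean stationary cones by the usual blow-up) is where the real content lies. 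The second obstacle, technically easier but worth care, is the topological ``removal of a codimension-$\ge 3$ set does not disconnect'' argument \emph{within} the possibly-singular set $\spt(V)$ rather than within a smooth manifold; this is why the footnote warns that the bare dimensional bound is not by itself enough, and one genuinely needs the local structure of $V$ near $\sing(V)$ (namely that it is, modulo the splitting, a cone with isolated singularity) to see that $\reg(V)$ stays connected across the singular set.
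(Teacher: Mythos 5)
Your Step 1 is essentially the paper's argument for the dimension bound: one reduces to tangent cones with spine of dimension $\ge m-2$, whose cross-section is a geodesic net in the sphere with density $<2$, hence of density $1$ or $3/2$ at each point; the $3/2$ case is a triple junction and must be excluded. The one ingredient you flag as an ``obstacle'' but do not supply is exactly how the paper excludes it: hypothesis (2) only forbids ${\bf Y}^m$ as a tangent cone \emph{at points of $V$}, not as a tangent cone to a tangent cone or as a cross-section, so one needs an $\varepsilon$-regularity statement near ${\bf Y}^m$. The paper gets this from L.~Simon's theorem on cylindrical tangent cones for the multiplicity-one class: if any blow-up sequence of varifolds satisfying (1)--(2) converged to ${\bf Y}^m$, they would be $C^{1,\alpha}$ perturbations of ${\bf Y}^m$ and hence would contain an actual point with tangent cone ${\bf Y}^m$, contradicting (2). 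Without this compactness step your classification of cones does not close.

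The genuine gap is in your Step 2. The ``connect $A$ to $B$ by a path in $\spt(V)$ and perturb it off $\sing(V)$'' argument cannot work as stated: there is no transversality theorem inside $\spt(V)$, which is not a manifold near the singular set, and the mere dimensional bound $\dim\sing(V)\le m-3$ does not imply that removing $\sing(V)$ preserves connectedness (this is precisely what the footnote warns about; compare two $m$-planes meeting along an $(m-2)$-plane, where the union is connected but the regular part is not -- the density hypothesis is what must be used to forbid such configurations, not the dimension count). Your fallback, that ``$V$ near a singular point looks like $(\text{cone with isolated singularity})\times\R^{m-j}$,'' is not available either: tangent cones need not be unique and there is no $\varepsilon$-regularity at these strata, so the cone model gives no local structure of $\spt(V)$. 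The paper's actual mechanism is different and is the real content of the lemma: an induction on the dimension $m$. At a singular point $x_0\in\overline{E_1}\cap\overline{E_2}$ one takes a tangent cone $\bC$; its link $\bC\cap\partial B_1$ again satisfies hypotheses (1)--(2) (by the Step~1 compactness), its support is \emph{connected} by a Frankel-type theorem (two disjoint stationary hypersurface-varifolds of density $<2$ in the round sphere must intersect), and by the inductive hypothesis its regular part is path-connected; since regular points of $\bC$ are limits of regular points of the rescalings $V_{x_0,r_n}$, this connects $E_1$ to $E_2$ through $\reg(V)$, a contradiction. Both the Frankel input and the induction are absent from your proposal, and without them the topological half of the lemma is not proved.
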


\begin{proof}
We divide the proof into the following three steps.
\begin{enumerate}
    \item[Step 1:] Let $(V_n)_n$ be any sequence of varifolds with mean curvatures bounded uniformly in $U$, and satisfying (2). Then $V_n$ cannot converge locally in the sense of varifolds to a triple junction ${\bf Y}^m$.
    \item[Step 2:] $\dim(\sing(V))\leq m-3$.
    \item[Step 3:] The connected components of $\spt(V)$ are equal to the path connected components of $\reg(V)$.
\end{enumerate}

\medskip

\noindent\emph{Proof of Step 1:} Suppose by contradiction that $V_n \rightharpoonup {\bf Y}^m$ locally in $U$. Since each $V_n$ is an element of L.~Simon's multiplicity one class, \cite{Simon_cylindrical}, we can apply his result on the cylindrical tangent cones to deduce that for $m$ sufficiently large $V_n\cap U $ is a $C^{1,\alpha}$ deformation of ${\bf Y}^m$. In particular there is $x\in \spt(V_n)$ such that ${\bf Y}^m\in{\rm Tan}(x,V)$, which is a contradiction.

\medskip

\noindent\emph{Proof of Step 2:} Let us denote with ${\rm spine}(\bC):=\{x\in \spt \bC\,:\,\Theta_\bC(x)=\Theta_\bC(0)\}$. It is enough to show that if $\bC\in {\rm Tan(x, V)}$ and $\dim({\rm spine}(\bC))\geq 2$, then $\bC$ is a plane with multiplicty one. The conclusion then follows from Allard's regularity theorem and Almgren-Federer-White stratification.

Let $\bC=W\times \R^{m-2}$, for some two dimensional stationary cone $W$ such that $\Theta^m_W(x)\leq \Theta_\bC^m(0)<2$. It follows that $W\cap \partial B_1$ is a geodesic net, and since $\Theta_W^m(x)<2$, it follows that either $\Theta_W^m(x)=1$ or $\Theta_W^m(x)=3/2$, see for instance \cite{AllAlm}. The second case is not possible by Step 1, since either $V$ at $0$ or $\bC$ at a point in $\partial B_1$ would have a blow-up sequence converging to ${\bf Y}^m$. Therefore $\Theta_W^m(x)=1$, which implies that $W\cap \partial B_1$ is a smooth geodesic, and therefore $\bC$ is a plane.

\medskip

\noindent\emph{Proof of Step 3:} It is sufficient to show that if $V$ is as assumed and $\spt(V)$ is connected, then $\reg(V)$ is path-connect. 

We show it by induction on the dimension $m$. Due to Step 2, the claim holds for $m=2$. Now let $m_0+1$ be the first dimension where the claim fails, i.e. there is a varifold $V$ satisfying the assumptions, but $\reg(V) = E_1 \dot{\cup} E_2$, where $E_i\neq \emptyset$ and $E_1$ is not path connected to $E_2$. By assumption $\overline{\reg{V}}\cap U = \spt(V) \cap U$ is connected, hence there exists $x_0 \in \Sing(V) \cap \overline{E_1}\cap \overline{E}_2 \cap U$. Note that if $\bC$ is a tangent cone to $V$ at $x_0$, we have that $\bC\cap \partial B_1$ satisfies the assumptions of the Lemma due to Step 1, i.e. it is an $m_0$-dimensional integer rectifiable varifold satisfying \eqref{as.bounded curvature} and \eqref{as.density bound}. Hence by induction hypothesis $\spt(\bC\cap \partial B_1)$ connected implies $\reg(\bC\cap \partial B_1)$ is path connected.  Since every regular point of $\bC$ is approached by regular points of $V_{x_0,r_n}$ for the corresponding blow-up sequence $r_n\downarrow 0$, we conclude that $\reg{(V_{x_0,r_n})}\cap \partial B_1=\frac{1}{r_n} \left(\reg(V)-x_0\right)
\cap \partial B_1$ is path connected. This contradicts that $E_1$ and $E_2$ are not path connected. 

Hence it remains to show that $\bC \cap \partial B_1$ is connected. Assume by contradiction that $\bC \cap \partial B_1= W_1\dot{\cup} W_2$ with $\spt(W_1)$ and $\spt(W_2)$ being disconnected. Since $W_i$ are stationary varifolds in $\partial B_1$ with $\Theta_{W_i}^m(x)<2$ we can follow the argument outlined in \cite[Lemma 3.1, 3.2, 3.3]{HirschMarini2020}, which is essentially an application of Frankel's theorem, \cite{Frankel1961}.
\end{proof}

\section{Gehring's type estimate from topology}

In this section we first prove a topological property for multiplicity $2$ stationary hypercurrents, and then we use it to deduce the estimate \ref{as.b2-Gehring} needed to apply Proposition \ref{prop:Gehring}. This is the only Section where the multiplicity $2$ assumption and the assumptions of codimension $1$ and orientability (i.e., currents instead of varifold) are needed.

For the rest of the section we will make the following assumption:

\begin{ipotesi}\label{as:intcurr} Let $T\in \bI^m(\bC_2)$, $\bC_2\subset \R^{m+1}$, be a stationary integral current such that
\begin{enumerate}[label=(AC.\arabic*)]
    \item $\spt(\partial T)\subset \partial \bC_2$;
    \item $\p_{\pi_0}(T\res \bC_2)=2\a{B_2}$;
    \item $\bE_{or}(T,\pi_0,\bC_2)\leq \eps_{or}$, for a constant $\eps_{or}=\eps_{or}(m)>0$.
\end{enumerate}
\end{ipotesi}

We remark that, up to choosing $\eps_{or}$ sufficiently small, Assumptions \ref{as:intcurr} for $T$ imply Assumptions \ref{ip.small excess} for the associate varifold $V(T)$. The key topological lemma is the following:

\begin{lemma}[Separation lemma]\label{lem:topo}
There exists $\eps_{or}(m)>0$ such that if $T\in \bI^m$ is as in Assumption \ref{as:intcurr} and the following condition is satisfied
\begin{equation}\label{as:mult1}
    \Theta_T^m(x)<2 \qquad \text{for all $x\in \bC_2$}\,,
\end{equation}
then there exist $M_1, M_2$ disconnected smooth embedded $m$-dimensional manifolds such that $T\res\bC_{1}=\a{M_1}+\a{M_2}$.
\end{lemma}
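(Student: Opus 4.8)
The plan is to prove the Separation Lemma by a combination of Allard-type structure theory for the multiplicity one pieces, Simon's maximum principle / connectedness results, and the orientation constraint coming from the fact that $T$ is a current with boundary outside $\bC_2$.

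First I would use Assumptions \ref{as:intcurr} together with \eqref{as:mult1} to observe that the associated varifold $V=V(T)$ has density strictly below $2$ everywhere in $\bC_2$, hence by monotonicity $\Theta_V^m(x)\in[1,2)$; since $V$ is stationary and satisfies the small unoriented excess bound (which follows from (AC.3) for $\eps_{or}$ small), Allard's theorem gives that at every point of density $1$, $V$ is a smooth embedded minimal graph nearby. Next I would invoke Lemma \ref{lem:connectedness}: the only obstruction to applying it is the hypothesis ${\bf Y}^m\notin {\rm Tan}(x,V)$, so the key intermediate step is to rule out triple junctions. Here the multiplicity $2$/codimension $1$/orientability hypothesis enters: if $V$ had a tangent cone ${\bf Y}^m$ at some $x\in\bC_2$, then near $x$ by Simon's cylindrical tangent cone theorem $V$ would be a $C^{1,\alpha}$ perturbation of ${\bf Y}\times\R^{m-1}$, which has three sheets meeting along an $(m-1)$-dimensional spine; but an integral \emph{current} realizing this would need to assign orientations to the three sheets summing to zero along the spine (no boundary), which is impossible (each sheet has multiplicity $\pm1$ and three $\pm1$'s cannot cancel). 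This contradiction — i.e., that a genuine triple junction cannot be the blow-up of an integral current with no interior boundary and density $<2$ — is the heart of the argument, and I expect it to be the main obstacle: one has to be careful that the perturbed sheets indeed carry the current structure and that the cancellation argument is legitimate even without assuming the sheets are flat. Note that density $<2$ is exactly what forbids two sheets gluing into a smooth multiplicity-$1$ passage plus a third sheet; combined with excess smallness it forces the spine picture.

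Once triple junctions are excluded, Lemma \ref{lem:connectedness} applies and tells us that $\sing(V)$ has dimension at most $m-3$ and, crucially, that the connected components of $\spt(V)$ coincide with the path-connected components of $\reg(V)$. On each connected component of $\reg(V)$ the orientation $\vec T$ is a well-defined continuous (indeed smooth) unit $m$-vector field, so $T$ restricted to a component is $\pm\a{M}$ for the smooth manifold $M=\reg(V)$ on that component (density exactly $1$ there by the density gap and Allard). It then remains to count components: by (AC.2) the pushforward $\p_{\pi_0 \sharp}(T\res\bC_2)=2\a{B_2}$, and by \eqref{lem:measexc} (or directly the Lipschitz approximation in Theorem \ref{thm:varifoldold}) the mass of $\|V\|$ in $\bC_1$ is close to $2|B_1|$, so over a.e. vertical line there are exactly two sheets counted with multiplicity; since all multiplicities are $1$, $\spt(V)\cap\bC_1$ is the union of exactly two graphs, giving exactly two connected components $M_1,M_2$, each a smooth embedded minimal hypersurface, with $T\res\bC_1=\epsilon_1\a{M_1}+\epsilon_2\a{M_2}$.

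Finally I would fix the signs: the condition $\p_{\pi_0\sharp}(T\res\bC_2)=2\a{B_2}$ with the \emph{positive} multiplicity $2$ forces $\epsilon_1=\epsilon_2=+1$ (if one sheet had negative orientation the projection would be $0\a{B_2}$ or $-2\a{B_2}$ on the overlap, contradicting (AC.2), since both $M_i$ project onto $B_1$ as degree-$\pm1$ graphs by the small excess). Hence $T\res\bC_1=\a{M_1}+\a{M_2}$ with $M_1,M_2$ disjoint (disconnected) smooth embedded $m$-manifolds, as claimed. The disconnectedness is automatic from the component count: $M_1$ and $M_2$ are distinct connected components of $\reg(V)$, hence disjoint, and $\sing(V)\cap\bC_1=\emptyset$ because a singular point would be a limit of regular points from possibly both components yet $\sing(V)$ has dimension $\le m-3<m-1$ and near it the two-graph structure persists — more simply, a density-$<2$ singular point with two sheets would force a tangent cone that is a sum of two planes meeting transversally, whose density on the spine is $2$, contradiction, so $\reg(V)\cap\bC_1=\spt(V)\cap\bC_1$.
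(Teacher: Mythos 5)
Your exclusion of triple junctions via non-orientability and your appeal to Lemma \ref{lem:connectedness} match the paper, but the decisive step of the lemma is missing. You write that since a.e.\ fiber of $\p_0$ over $B_1$ contains exactly two points of multiplicity one, ``$\spt(V)\cap\bC_1$ is the union of exactly two graphs, giving exactly two connected components.'' This does not follow: a \emph{connected} embedded minimal hypersurface can perfectly well project two-to-one onto the base (think of two nearly flat sheets joined by a small neck inside the bad set of the Lipschitz approximation, where the fiber count argument gives no control). Having two points with multiplicity one in almost every fiber is the hypothesis you start from, and the statement that the support splits into two disjoint graphs is essentially the conclusion of the Separation Lemma — you have asserted it rather than derived it. The sign-fixing paragraph at the end is then circular, since it presupposes the two-graph decomposition.

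What is actually needed, and what the paper supplies, is an argument showing that the two points $p_1=(x_0,f_1(x_0))$ and $p_2=(x_0,f_2(x_0))$ over a good graphical point $x_0$ lie in \emph{different} path components of $\reg(T)$. The paper does this by slicing $T$ at radius $r_0$, filling the slice with a current $R$ supported on the boundary of a convex region $\mathcal{C}$, and taking $E\in\bI^{m+1}$ with $\partial E = T\res\bC_{r_0}-R$, so that the density $\Theta_E$ is locally constant on $\mathcal{C}\setminus\spt(T)$ and jumps by $\pm 1$ across each sheet of $T$. If $p_1$ and $p_2$ were joined by a path $\gamma$ in $\reg(T)$, transporting the normal $N$ with $\vec{T}\wedge N=\vec{E}^{m+1}$ along $\gamma$ and using that $N\cdot\vec{e}_{m+1}>0$ at both endpoints (this is where $\p_{\pi_0\sharp}T=+2\a{B_2}$ enters) forces $\Theta_E$ to take the same value on both sides of the sheet $f_2$, contradicting $\partial E\res\mathcal{C}=T\res\bC_{r_0}$. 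Some argument of this type — a degree or winding-number obstruction built from the orientation — is indispensable; the fiber count and Allard regularity alone cannot rule out the connected double cover. (The subsequent steps, that each component pushes forward to $\a{B_{r_0}}$ and that a third component is excluded by the monotonicity formula and the mass bound, are also absent from your sketch but are comparatively routine.)
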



\begin{proof} We prove the statement by the following steps.

\emph{Step 1:} Find a ``good'' boundary point, i.e. $\frac32< |x_0|=r_0 <\frac74 $ such that 
\begin{enumerate}
    \item $S=\langle T,|\p_0\cdot|, r_0\rangle \in \bI^{m-1}(\partial \bC_{r_0})$ and $\p_0^\sharp S = 2\a{\partial B_{r_0}} $
    \item there is an open neighborhood $U$ of $x_0$ and two Lipschitz functions $f_i\colon \pi_0\to \R$ such that $T\res \p_0^{-1}(U) = \a{G_{f_1}(U)} + \a{G_{f_2}(U)}$, where $G_f(U)$ denotes the graph of $f$ over $U$.
\end{enumerate}
\emph{Step 2:} Construct a ``helpful'' current $S\in \bI^{m-1}$ such that $\partial R=S$ and a top dimensional current $E\in \bI^{m+1}$ with $\partial E=T-R$.\\
\emph{Step 3:} There exists are at least two connected components $M_1$, $M_2$;\\
\emph{Step 4:} These two components are ``large'', in the sense that $(\p_0)_\sharp\a{M_i}=\a{B_{r_0}}$ for $i=1,2$.\\
\emph{Step 5:} There are no other connected components.

\bigskip

\noindent \emph{Step 1:} Let $f\colon B_{\frac{7}{4}} \to \I{2}(\R)$ be the Lipschitz-approximation of $T$ in $\bC_{7/4}$ with Lipschitz constant $\frac12$ and bad set $|B_{\frac74}\setminus K|\lesssim \bE_{or}(T,\pi_0,\bC_2)$, whose existence is guaranteed by (1)-(3) in Assumptions \ref{as:intcurr} and Theorem \ref{thm:varifoldold} (see also \cite[Theorem 2.2]{DS1} in the setting of integral currents)..  Furthermore let $\mathcal R$ be the collection of regular points of $\p_0 \colon \reg(T) \to B_2$. Sard's Theorem asserts that $\mathcal R$ is open and of full measure. Since for $\epsilon_{or}>0$ sufficiently small, we have 
\[|\left(K\cap \mathcal{R} \cap B_{\frac74}\right)\setminus \left(B_\frac32\cup \p_0(\sing T) \right)|>0\,,
\]
we may pick $x_1$ in it. Since $x_1$ is a regular point of $\p_0$ we conclude that $\reg{T} \cap \p_0^{-1}(U) = \bigcup_{i=1}^k G_{f_i}(U)$ for some open neighborhood $U$. Since $x_1\in K$ we deduce that $k=2$. Now since $U$ is open we can find $x_0 \in U$ such that for $r_0=|x_0|$ the conditions on the slice are satisfied since this holds for a.e. $r$. 

\medskip

\noindent\emph{Step 2:} Observe that since the mass of $\norm{T}(\bC_2)$ is finite, the monotonicity formula implies that there is a constant $L>0$ such that $\spt T \res \bC_{\frac74} \subset B_{\frac{7}{4}} \times [-L,L]$. 

Consider the open convex cylinder piece  $\mathcal{C}= \left( B_{r_0}\times (-2L,\infty)\cup \bB_{r_0}(-2L\vec{e}_{m+1}) \right)$. Since $\mathcal{C}^c$ is closed, simply connected and $S \subset \partial \mathcal{C}$, there is $\tilde{R} \in \bI^m(\mathcal{C}^c)$ with $\partial \tilde{R}=S$. We set $R=\mathbf{r}_{\sharp} \tilde{R}$, where $\mathbf{r}\colon \R^{m+1}\to \mathcal{C}$ is the closest point projection, which is $1$-Lipschitz since $\overline{\mathcal{C}}$ is convex. Hence we still have $\partial R=S$, but additionally $\spt(R)\subset \partial \mathcal{C}$.  
Now we let $E \in \bI^{m+1}(\overline{\bB_{2L+4}})$, with $\partial E = T\res \bC_{r_0} - R$. 

Note that $\spt(E) \subset \overline{\mathcal{C}}$ and the support of $\partial E \res \mathcal{C} = T\res \bC_{r_0}$ is relatively closed in $\mathcal{C}$. Hence for any $W \subset \mathcal{C}\setminus \spt(T)$ open we have $\partial E \res W=0$ so that $E\res W = \Theta_{E} \a{W}$ for some constant $\Theta_E \in \Z$. In other words $\Theta_E(x)$, the density of $E$, is locally constant on the open connected components of $\mathcal{C}\setminus \spt(T)$. 

\medskip

\noindent\emph{to Step 3:} Let $p_i=(x_0,f_i(x_0))$ and up to relabeling we may assume that $f_1(x_0)<f(x_2)$. Let $M_i$ be the connected component of $\reg{T}\cap \mathcal{C}$ that contains $p_i$. We want to show that $M_1\neq M_2$. Assume by contradiction that $M_1 = M_2$. The assumptions $\Theta_T(x)<2$ and the fact that a triple junction is not orientable as a current without boundary let us apply Lemma \ref{lem:connectedness}. Hence $M_i$ is path-connected, hence there is a smooth path $\gamma\colon [0,1] \to M_1 $ such that $\gamma(0)=p_1, \gamma(1)=p_2$. Since $\gamma$ lies in the regular set of $T$, which is an open set, we can find a smooth normal vector field $N(t)$ such that $\vec{T}_{\gamma(t)}\wedge N(t)=\vec{E}^{m+1}$, the orientation of $\R^{m+1}$, and $\delta>0$, such that for the tubular neighborhood of $\gamma$ we have 
\[  (\gamma)_\delta\cap \spt(T) = (\gamma)_\delta \cap M_1.\]
Introducing $\Gamma(t,s)=\gamma(t)+sN(t)$, we observe that
\begin{enumerate}
    \item due to Step 1 property (2) and (2) in Assumptions \ref{as:intcurr}, we have $N(t)\cdot \vec{e}_{m+1} >0$ for $t =0,1$;
    \item due to the choice of $\delta$ we have  $\Gamma(t,s)\cap\spt(T)=\emptyset$ for all $0<s<\delta$;
    \item once again due to the graphicality of (2) in Step 1, we have that $\Theta_E(\Gamma(t,s))= \Theta_E(\gamma(t)+s\vec{e}_{m+1})$ for all $0<s<\delta'$ and $ t \in [0,\delta']\cup [1-\delta',1]$ for $0<\delta'<\delta$ sufficient small. 
\end{enumerate}
On the one hand we have $\Theta_E(\Gamma(t,s))\equiv\theta_1$ constant on $[0,1]\times (0,\delta')$ due to (2). 
On the other hand we have $\Theta_E(\gamma(t)+s\vec{e}_{m+1})=\theta_1$ for $0<t<\delta'$ and $\gamma_{m+1}(t) + s <f_2(\gamma'(t))$, where $\gamma(t)=(\gamma'(t),\gamma_{m+1}(t))$. Combining both we found for all $0<s,t$ sufficient small 
\[ \Theta_E(f_2(\gamma'(t))+ s\vec{e}_{m+1})= \Theta_E(f_2(\gamma'(t))- s\vec{e}_{m+1})\]
contradicting that $\partial E\res \mathcal{C} = T\res \bC_{r_0}$.

\medskip
\noindent\emph{to Step 4:}
\medskip
Defining $T_i=T\res \left(\mathcal{C}\cap M_i\right)$ and $T_3=T-T_1-T_2$, we observe that these are stationary in $\mathcal{C}$ themselves, since $\overline{M_1},\overline{M_2}, \spt(T)\setminus \overline{M_1\cup M_2}$ are disjoint in $\bC_{r_0}$. Moreover, for all of them we have $\partial T_i \res \bC_{r_0} = 0$. Hence we have that $(\p_0)_{\sharp} T_i = \theta_i \a{B_{r_0}}$ for some constant $\theta_i$. Restricting our attention to the open set $U$ of Step 1, we deduce that $\theta_1=\theta_2=1$, so that for $i=1,2$
\begin{equation}\label{eq.large components}
  (\p_0)_\sharp T_i=\a{B_{r_0}}\quad \text{and}\quad \mass(T_i) \ge |B_{r_0}| \,.
\end{equation}

\noindent\emph{to Step 5:}
Suppose by contradiction that $p_3\in \spt(T_3)\cap \bC_{1}$. As observed in the previous step $T_3$ is stationary in $\bC_{r_0}$, and so we can apply the monotonicity formula to it concluding that 
\[ \norm{T_3}(\bC_{r_0}) \ge \norm{T}(\bB_{\frac12}(p_3))\ge  |B_\frac12|\]
Combining it with \eqref{eq.large components} we found 
\begin{align*}
    2|B_{r_0}|  \le \norm{T}(\bC_{r_0})\le 2|B_{r_0}|+\bE_{or}(T,\pi_0,\bC_{r_0}))|B_{r_0}|\le 2|B_{r_0}| +2^m \epsilon_{or}\,.  
\end{align*}
This is a contradiction for $\epsilon_{or}$ sufficiently small depending on $m$. 
\end{proof}

As a corollary of Lemma \ref{lem:topo} and the usual Hardt-Simon estimate in \eqref{eq.heightbd}, we can prove that Assumption \ref{as.b2-Gehring} is satisfied in the case of multiplicity two stationary hypercurrent. 

\begin{corollary} Let $T\in \bI^m(\bC_4)$ be as in Assumptions \ref{as:intcurr} (AC1)-(AC3) for $\eps_{or}$ small enough such that Lemma \ref{lem:topo} and Assumptions \ref{ip.small excess} for the associated varifold $V$ hold in $\p^{-1}(B_4)$. Then there exists a constant $C=C(m,n,Q)>0$ such that 
\begin{equation}
  \label{as.b2-Gehringproof} \nu^2(B_1)\leq C \left(\frac{\nu^2(B_2)}{|B_2|}\right)^{\sfrac12} \nu(B_2)\,.
\end{equation}
\end{corollary}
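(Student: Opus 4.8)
The plan is to use the separation Lemma \ref{lem:topo} to localize the problem: at scales where $T$ restricted to a ball decomposes into two disjoint graphs, the unoriented excess density $\be$ is controlled by a genuine height-type quantity to which we can apply the Hardt–Simon estimate \eqref{eq.heightbd}, and this will furnish the reverse-Hölder inequality \eqref{as.b2-Gehringproof}.

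First I would distinguish two regimes depending on whether the hypothesis \eqref{as:mult1} of Lemma \ref{lem:topo} holds in $\bC_2$. If there is a point $x\in \bC_2$ with $\Theta_T^m(x)=2$, then by the monotonicity formula and the mass bound coming from (AC.2)–(AC.3) the density-$2$ points must be confined near a single $m$-plane (the polar plane, as in the proof of Theorem \ref{thm:main} from Theorem \ref{thm:main2}), and in fact the excess measure $\nu^2$ is comparable at scales $B_1$ and $B_2$ — more precisely the bound \eqref{lem:measexc} and the strong constancy theorem force the current to be close to $2\a{\pi_0}$ and \eqref{as.b2-Gehringproof} becomes essentially trivial after using $\nu^2 = \int \be\, d\nu$ together with $\be \lesssim \bE_{un}^{1/2}$. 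If instead \eqref{as:mult1} holds, then Lemma \ref{lem:topo} gives $T\res \bC_1 = \a{M_1}+\a{M_2}$ with $M_1,M_2$ disjoint smooth $m$-manifolds; the same decomposition holds on every smaller concentric cylinder (indeed on $\bC_{r}$ for every $r\le 1$ by rescaling the argument), and each $\a{M_i}$ is a multiplicity-one stationary current whose associated varifold satisfies Assumptions \ref{ip.small excess} with $Q=1$.

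Next, on each sheet $M_i$ the unoriented excess coincides with the oriented one and, by (V.1) of Theorem \ref{thm:varifoldold} applied to $V(\a{M_i})$, the height of $M_i$ in a cylinder $\bC_r$ is bounded by $\bE_{un}(\a{M_i},\bC_{2r})^{1/2}$. The point is that $\nu^2(B_r)$ is, up to a constant, $\int_{\bC_{2r}} |\p_{TM_1}-\p_0|^2 + |\p_{TM_2}-\p_0|^2$, while a Caccioppoli-type / first-variation estimate (or directly the tilt-excess control built into the proof of \eqref{eq.heightbd} in \cite{BDD}) bounds this quadratic quantity by the product of the $L^2$-height and the $L^1$-norm of the tilt — this is exactly the structure of \eqref{as.b2-Gehringproof}, with $\nu^2(B_2)/|B_2|$ playing the role of the (squared) height divided by the scale and $\nu(B_2) = \int \be\, d\nu^2/\be = \int \be\,d\nu$ playing the role of the $L^1$-tilt. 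Summing over $i=1,2$ and using that the two sheets are disjoint so that no cross terms appear gives \eqref{as.b2-Gehringproof}.

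\textbf{Main obstacle.} The delicate point is the passage from the height bound \eqref{eq.heightbd}, which is an $L^\infty$ statement, to the $L^2$–$L^1$ interpolation inequality \eqref{as.b2-Gehringproof}: one cannot simply insert the $L^\infty$ height into a Caccioppoli inequality without losing a factor. The honest way is to run the same integration-by-parts against the stationarity of $\a{M_i}$ that underlies the Hardt–Simon estimate, but keeping the integral form: testing the first variation with a vector field of the form $\varphi(x)^2 (u_i(x) - c)\, \vec{e}_{m+1}$ where $u_i$ is the (single-valued, since we are on one sheet) height function and $c$ an appropriate average, one gets $\int \varphi^2 |\nabla u_i|^2 \lesssim \int |\nabla\varphi|^2 (u_i-c)^2 + (\text{curvature terms})$, and then Poincaré plus the $L^\infty$ bound on $u_i-c$ converts the right-hand side into $(\text{height}) \times \int \varphi|\nabla u_i|$. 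Making this rigorous requires care near $\partial \bC_1$ (where the two-graph structure may degenerate) and in controlling the bad set $B_2\setminus K$ from the Lipschitz approximation, where $\be$ is not controlled pointwise — but on that set one uses instead the crude bound \eqref{lem:measexc} on the mass, which is why the statement is phrased with the $L^1$ quantity $\nu$ rather than $\nu^2$ on the right.
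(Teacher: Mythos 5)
Your overall skeleton is the right one and matches the paper in its second half: dichotomize according to whether $\Theta_T=2$ somewhere, invoke Lemma \ref{lem:topo} in the multiplicity-one regime to split $T\res\bC_1$ into two disjoint smooth sheets, and obtain the reverse-H\"older structure by combining the Hardt--Simon height bound \eqref{eq.heightbd} with a first-variation (Caccioppoli) identity that bounds the $L^2$-tilt by $(\text{height})\times(L^1\text{-tilt})$. Your ``main obstacle'' paragraph correctly identifies that the $L^\infty$ height must enter through an integration by parts rather than be inserted crudely. The paper does this more directly than your graph-based version: it tests $\delta V(X)=0$ with $X=\theta^2(\p_0(x))\,\p_0^\perp(x)$, which yields $\int\theta^2\tfrac12|\p-\p_0|^2\,d\norm{V}\lesssim \bh_0\int|D\theta|\,|\p-\p_0|\,d\norm{V}$ at the varifold level, with no graph function, no Poincar\'e step, and no issue at $\partial\bC_1$ or on the bad set of the Lipschitz approximation (which never enters this corollary). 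Since $\theta$ depends only on $\p_0(x)$, the cross term $\nabla^T\theta\cdot\p_0^\perp(x)$ is automatically of size $\bh_0|D\theta|\,|\p-\p_0|$, which is where the interpolation comes from.

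The genuine gap is in your first case. When there is a point of density $2$ in $\bC_2$, the inequality \eqref{as.b2-Gehringproof} is not ``essentially trivial'', and the step you use to dispose of it --- the pointwise bound $\be\lesssim \bE_{un}^{1/2}$ --- is false: $\be$ is the pointwise tilt and is in no way controlled by the averaged tilt excess (think of a small region where the tangent planes are nearly vertical; this contributes little to $\bE_{un}$ but $\be$ is of order one there). Closeness of $T$ to $2\a{\pi_0}$ in mass or in flat norm does not give a reverse-H\"older inequality for $\nu^2$. The correct observation, which is how the paper argues, is that a density-$2$ point is precisely a point of \emph{maximal} density for the associated varifold with $Q=2$, so (V.1) of Theorem \ref{thm:varifoldold} applies directly to $V$ and bounds $\bh(V,\bC_1,\pi_0)$ by $\bE_{un}(V,\bC_2,\pi_0)^{1/2}$; one then runs the \emph{same} Caccioppoli identity as in the separated case, with no need for the two-sheet decomposition. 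In other words, the separation lemma is only needed to manufacture points of maximal density (on each multiplicity-one sheet) when no density-$2$ point is available; once a maximal-density point exists, the argument is uniform across both cases. You should replace your treatment of the first case by this, and you can then also drop the worries about the bad set and the degeneration of the two-graph structure near the boundary, which do not arise in either case.
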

\begin{proof}
    The proof now follows essential the same lines as \cite[Lemma 4.7]{HirschSpolaor2024} but for the sake of completeness we repeat the argument in our setting. 
    Recall the ``classical'' Hardt-Simon estimate\eqref{eq.heightbd}, that if $0$ is a point of maximal density then 
    \begin{equation}\label{eq.Hardt-Simon}
        \bh^2(V,\B_4,\pi_0) \lesssim \int_{\bC_{8}} \frac12 |\p-\p_0|^2 \, d\norm{V}\,.
    \end{equation}
    The Caccioppoli-inequality for stationary varifolds states that
    \begin{equation}\label{eq.Caccioppoli inequality}
        \int_{\bC_1} \frac12 |\p-\p_0|^2  \lesssim \bh(V,\B_4,\pi_0) \int_{\bC_2} \sqrt{\frac12 |\p-\p_0|^2} \, d\norm{V}. 
    \end{equation}
    This can be obtained by the following classical computation. Setting $\bh_0= \bh(V,\B_4,\pi_0)$, up to a translation we may assume that $\spt(V)\cap \bC_4 \subset B_4\times B_{\bh_0}$. Furthermore we may test the varifold with $X=\theta^2(\p_0(x))\p_0^\perp(x)$ where $\theta \in C^1_c(B_2)$ with $\theta=1$ on $B_1$ hence 
    \begin{align*}
        \int \theta^2 \frac12|\p-\p_0|^2 + 2 \theta \nabla^T\theta \cdot \p_0^\perp(x) \, d\norm{V} = \delta V(X)=0\,.
    \end{align*}
    Hence by H\"olders inequality we can conclude the desired estimate
    \[\int \theta^2 \frac12|\p-\p_0|^2 \, d\norm{V} \lesssim  \bh_0\int |D\theta||\p-\p_0|\, d\norm{V}\,.\]
    
    Now we can conclude as follows. If there is $p=(x,y) \in \bC_2$ with $\Theta_T(p)=2$, then we have a point of maximal density in $\bC_2$ so \eqref{eq.Hardt-Simon} together with \eqref{eq.Caccioppoli inequality} immediately implies 
    \[ \bE_{un}(V, \pi_0,\bC_1) \lesssim \left( \bE_{un}(V,\pi_0, \bC_{10})\right)^\frac12 \int_{\bC_{2}} \sqrt{\frac12 |\p-\p_0|^2 } \, d\norm{V} \]

    If there is no such point we have $\Theta_T(x)<2$ in $\bC_2$ so that we are in the situation of Lemma \ref{lem:topo}. Thus we conclude that $T\res \bC_1= \a{M_1} + \a{M_2}$ with $M_i$ smooth minimal surfaces. In particular every point is a point of maximal density and so we can argue as above providing 
    \[\bE_{un}(M_i, \pi_0,\bC_{\frac12}) \lesssim \left( \bE_{un}(M_i,\pi_0, \bC_{1})\right)^\frac12 \int_{\bC_{1}} \sqrt{\frac12 |\p-\p_0|^2 } \, d\norm{M_i}\qquad i=1,2
    \]
    Adding both and combining it with the previous case give us 
    \[\nu^2(B_{\sfrac12})\leq C \left(\frac{\nu^2(B_{10})}{|B_{10}|}\right)^{\sfrac12} \nu(B_{10})\,.\]
    Since by rescaling this holds for each ball $B_r$ to $B_{10 r}$ for $r>\sfrac{1}{40}$ a covering theorem implies that \eqref{as.b2-Gehring} holds.  
\end{proof}

\section{Proof of Theorem \ref{thm:main2}}  

In this section we conclude the proof of Theorem \ref{thm:main2}. We will first explain how to derive the same results as in \cite[Section ..]{HirschSpolaor2024}. The conclusion will then follow by the same arguments as in \cite[]{HirschSpolaor2024}.

We start with the following version of the strong Almgren's approximation using unoriented excess instead of the oriented one.

\begin{theorem}[Almgren's strong approximation]\label{thm:almstr}
    There exist constants $C, \gamma, \eps>0$, depending on $m,n$ with the following property. Assume that $\bI^m(\bC_{4r}(x))$ is a stationary current satisfying Assumptions \ref{as:intcurr} in $\bC_{4r}(x)$ and with $\eps=\eps_{or}$. Then there is a map $f\colon B_r(x)\to \I2(\R^n)$ and a closed set $K\subset B_r(x)$ such that denoting with $E=\bE_{un}(V,\pi_0,\bC_{4r}(x))$ we have
    \begin{gather}
        \Lip(f)\leq C\, E^\gamma \,,\label{e:lipbd}\\
        T\res (K\times \R)=\bG_{f}\res (K\times \R)\quad\text{and}\quad |B_r(x)\setminus K|\leq C\, E^{1+\gamma}\,r^m\,, \label{e:graphcoincide}\\
        \left|\|T\|(\bC_{\sigma r}(x))-2\,\omega_m\,(\sigma r)^m-\frac12\int_{B_{\sigma r}(x)}|Df|^2 \right|\leq C\, E^{1+\gamma}\,r^m\quad\forall 0<\sigma\leq 1\,,\label{e:areaenergy}\\
        {\rm osc}_{B_r(x)}(f):=\inf_p \sup_{y\in B_r(x)} \G(f(y),2\a{p})\leq C \,\bh(\bG_f,\bC_{4r}(x), \pi_0)+C\, E^{\frac12}\,r\,,\label{e:oscillation}
\end{gather}
Moreover, if we let $\mu, \nu^2$ be the measures defined in Section \ref{ss:Gehring} for $T$, then
\begin{equation}\label{eq.strongdensity}
    \nu^2(E\cap K)\leq C\,\int_{E\cap K}|Df|^2
\quad \text{and}\quad 
|Df|^2(x)\leq C\, M_{\mu}\nu^2(x) \quad \text{for all }x\in K\,.
\end{equation}
\end{theorem}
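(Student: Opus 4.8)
**The plan is to prove Theorem \ref{thm:almstr} by running the by-now classical strong Almgren approximation scheme, with the single modification that we work with the unoriented excess and feed in the higher integrability estimate from Proposition \ref{prop:Gehring}.** The starting point is Theorem \ref{thm:varifoldold}, which already provides, under Assumptions \ref{ip.small excess} (guaranteed here by \ref{as:intcurr} for $\eps_{or}$ small), a $Q$-valued Lipschitz function with Lipschitz constant controlled by a power of the excess, a bad set $K_\lambda$ whose complement has measure $\lesssim \lambda^{-1}E$, and the mass-balance estimate \eqref{lem:measexc}. The first step is to upgrade this to a \emph{single} Lipschitz approximation $f$ on $B_r(x)$ with $\Lip(f)\le C E^\gamma$ and $|B_r(x)\setminus K|\le C E^{1+\gamma}r^m$: this is exactly the point where higher integrability enters. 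One sets up a maximal-function stopping condition — keeping the points $x$ where $M_\mu \nu^2(x)$ is below a threshold comparable to $E^{2\gamma}$ — and uses Proposition \ref{prop:Gehring} (whose hypothesis \ref{as.b2-Gehring} is supplied by the Corollary of Lemma \ref{lem:topo}) to conclude that the excluded set has the superlinear bound $\int \phi(M_\mu\nu^2)\,d\nu^2 \lesssim \phi(E) E \lesssim E^{1+\delta}$, hence (after covering and the trivial bound $|B\setminus K|\lesssim \lambda^{-1}\nu^2(B)$ on the exceptional set) $|B_r(x)\setminus K|\lesssim E^{1+\gamma}r^m$. On $K$ one has $|Df|^2\lesssim M_\mu\nu^2$ pointwise, which is the second half of \eqref{eq.strongdensity}; the first half, $\nu^2(E\cap K)\le C\int_{E\cap K}|Df|^2$, follows from the graph coincidence $T\res(K\times\R)=\bG_f\res(K\times\R)$ together with the elementary fact that on the graph of a Lipschitz multifunction $\be^2$ is comparable to $|Df|^2$ modulo higher-order terms.

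**The second step is to produce the current estimates \eqref{e:graphcoincide}, \eqref{e:areaenergy}, \eqref{e:oscillation} from the varifold/graph approximation.** Here the argument is the standard one from De Lellis–Spadaro (cf.\ \cite[Theorem 2.2]{DS1}) and from \cite{HirschSpolaor2024}: the coincidence $T\res(K\times\R)=\bG_f\res(K\times\R)$ is immediate once $K\subset K_\lambda$ and we have arranged that over $K$ the support of $T$ is exactly the graph of $f$ (property of $K_\lambda$ in Theorem \ref{thm:varifoldold}), using that $T$ has no boundary in the cylinder and the two sheets have density one each. The Taylor expansion \eqref{e:areaenergy} of the mass in terms of $\tfrac12\int|Df|^2$ is obtained by writing $\|T\|(\bC_{\sigma r})=\|\bG_f\|(\bC_{\sigma r}\cap(K\times\R)) + \|T\|(\bC_{\sigma r}\setminus(K\times\R))$, expanding the area functional $\sqrt{1+|Df|^2+\ldots}$ to second order on the good set, and controlling the error on the bad set by $\|T\|$-mass $+$ Dirichlet energy on $B\setminus K$, both of which are $\lesssim E^{1+\gamma}r^m$ — the first by \eqref{lem:measexc} and the mass bound $\|T\|\le 2\Ha^m$, the second by $\int_{B\setminus K}|Df|^2\lesssim \int_{B\setminus K}M_\mu\nu^2\lesssim E^{1+\gamma}r^m$ again from Proposition \ref{prop:Gehring}. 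Finally \eqref{e:oscillation}: the oscillation of $f$ is controlled by the height $\bh(\bG_f,\bC_{4r},\pi_0)$ on $K$ plus, off $K$, a term that a covering/telescoping argument bounds by $E^{1/2}r$ (this is where one uses that the Lipschitz constant is a fractional power of $E$ and the bad set is small).

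**I expect the main obstacle to be Step 1, specifically the interplay between the stopping-time construction and the non-doubling, threshold-limited nature of the Gehring estimate.** Because $\mu$ is a priori not doubling and \ref{as.b2-Gehring} is only assumed below the scale where Assumptions \ref{ip.small excess} persist, one must be careful that every ball entering the maximal-function argument actually satisfies Assumptions \ref{ip.small excess} on the enlarged cylinder — this is handled by the backward induction of Step 1 of the proof of Proposition \ref{prop:Gehring}, which shows that once $\nu^2(2^kB)/\mu(2^kB)$ stays below $\delta_1\eps_{un}$ the varifold \emph{does} satisfy the small-excess assumptions on all intermediate scales, so the mass ratios are $\approx Q$ there. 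A secondary technical point is matching the exponent $\gamma$ in \eqref{e:lipbd}, \eqref{e:graphcoincide}, \eqref{e:areaenergy} with the exponent $\delta$ coming out of Proposition \ref{prop:Gehring}: one simply takes $\gamma\le \delta/(\text{large dimensional constant})$ so that all the powers of $E$ that appear are dominated by $E^{1+\gamma}$. Everything else — the passage from the varifold Lipschitz approximation to the current statements, and the oscillation bound — is by now routine and can be cited from \cite{DS1,HirschSpolaor2024} with only cosmetic changes (replacing oriented by unoriented excess, which only affects constants).
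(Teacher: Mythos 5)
Your proposal follows essentially the same route as the paper: rescale to $r=1$, use Proposition \ref{prop:Gehring} (with hypothesis \eqref{as.b2-Gehring} supplied by the corollary to Lemma \ref{lem:topo}) to upgrade the weak-$L^1$ maximal function bound to the superlinear estimate $\mu(\{M_\mu\nu^2>\lambda^2\}\cap B_1)\lesssim \lambda^{-2\delta}E^{1+\delta}$, feed this into the standard Lipschitz approximation of Theorem \ref{thm:varifoldold} with truncation $\lambda=E^\gamma$, $\gamma=\delta/(1+2\delta)$, and obtain \eqref{eq.strongdensity} from the codimension-one identity relating $|\p_{\bG_f}-\p_0|^2$ to $|Df|^2$ on the good set. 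The remaining estimates \eqref{e:areaenergy}--\eqref{e:oscillation} are handled, as you say, by the routine De Lellis--Spadaro Taylor expansion, which is exactly what the paper does.
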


\begin{proof} Firstly note that by scaling invariance it is sufficient to show it for $r=1$. Furthermore note that due to the nature of the Lipschitz approximation algorithm it is sufficient to have an estimate for the measure of the set where the maximal function of the excess is ``large''. But this improved from the usual weak $L^1$-estimate in our case to the following:
    Using the notation of Proposition \ref{prop:Gehring} i.e. $\mu(E) = (\p_0)_{\sharp}\norm{V}$. Note that for $\lambda^2< \epsilon_{un}$
    \begin{equation}
        \mu(\{ M_\mu \nu^2(x)> \lambda^2\}\cap B_1))= \mu(\{\min\{M_\mu \nu^2,\epsilon_0\}> \lambda^2\}\cap B_1)\lesssim \lambda^{-2\delta} \bE_{un}(V,\pi_0,\bC_4)^{1+\delta}.
    \end{equation}
    Hence we can apply the usual Lipschitz approximation of (V1) Theorem \ref{thm:varifoldold} with scale $\lambda =E^\gamma$ for $\gamma=\frac{\delta}{1+2\delta}$ to obtain the desired estimates. 

    In particular, note that for a Lipschitz function $f$  in co-dimension one has 
    \[
    |\p_{\bG_f}-\p_0|^2\,\sqrt{1+|\nabla f|^2}=\frac{|\nabla f|^2}{\sqrt{1+|\nabla f|^2}}\,,
    \]
    so that for any measurable set $E\subset B_1$ we have on $K$ where $|Df|\le C E^\gamma$ 
    \[
    \,\frac1{1+C\,E^{2\gamma}} \int_{E\cap K} |D f|^2 \leq\int_{E\cap K}|\p-\p_0|^2\,d\|T\|\leq \int_{E\cap K} |D f|^2\qquad \footnote{This can be generalized to higher codimension in the following way: for $f$ Lipschitz at each point of differentiability one has \[ |\p_{\bG_f}-\p_0|^2 \sqrt{|g|} = g^{ij} \partial_i f\cdot \partial_j f\,\sqrt{|g|} \,, \]
    where $g_{ij}= \delta_{ij}+ \partial_i f\cdot \partial_j f$. Since this implies that 
    \[ \frac{|Df|^2}{\sqrt{1+|Df|^2}} \le |\p_{\bG_f}-\p_0|^2 \sqrt{|g|} \le (1+|Df|^2)^{\sfrac{m}{2}}\,|Df|^2\]
    the set where $|Df|\le C E^\gamma$ one concludes the above estimate in all dimensions.} \,.
    \]
    By the arbitrary choice of $E$, this implies that
    \[
    |Df|^2(x)\leq C\,M_{\mu}\nu^2(x)\qquad \forall x\in K
    \]

\end{proof}

Next we derive the harmonic approximation result.

\begin{theorem}[Harmonic approximation]\label{cor:harmapprox}
Let $\gamma$ be the constant of Theorem \ref{thm:almstr}. Then, for every $\eta>0$, there is a positive constant $\eps>0$ with the following property. Assume that $T$ is as in Theorem \ref{thm:almstr}, $E := \bE_{un}(T, \bC_{4r}(x))<\eps$, then there exists a continuous classical solution $u\in W^{1,2}(B_r(x), \I2(\R^n))\cap C^{0,\alpha}(B_r(x), \I2(\R^n))$ such that
\begin{align}\label{eq:harmonicapp}
    &\frac1{r^2}\int_{B_r(x)}\G(f,u)^2+\int_{B_r(x)}\left(|D f|-|Du|\right)^2\notag\\
    &\quad +\int_{B_r(x)}\left|D(\etab\circ f)-D(\etab\circ u)\right|^2\leq \eta\,E\,r^m\,.
\end{align}
where $f$ is the strong Lipschitz approximation of Theorem \ref{thm:almstr}.
\end{theorem}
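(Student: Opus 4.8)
The plan is to run the standard contradiction/compactness argument of De~Lellis--Spadaro for the $\Dir$-minimizing approximation, with the only novelty being the reliance on the higher integrability estimate \eqref{eq.strongdensity} from Theorem~\ref{thm:almstr} to handle the error terms, rather than the weak-$L^1$ bound available in the classical (minimizing) setting. Assume the statement fails: there is $\eta>0$ and a sequence of stationary currents $T_k$ satisfying the hypotheses with $E_k:=\bE_{un}(T_k,\bC_{4r}(x_k))\to 0$, such that for the strong Lipschitz approximations $f_k$ of Theorem~\ref{thm:almstr} no map $u$ satisfies \eqref{eq:harmonicapp}. After translating and rescaling we may take $x_k=0$, $r=1$. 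Set the normalized maps $g_k:=E_k^{-1/2}f_k$, thought of as $\Iq$-valued; by \eqref{e:lipbd}, $\Lip(g_k)\le C E_k^{\gamma-1/2}\to\infty$ is \emph{not} bounded, so as in \cite{DS1,DLS_Blowup} one works on the set $\{|Dg_k|\le \lambda_k\}$ for a suitable truncation $\lambda_k\to\infty$, $\lambda_k E_k^{1/2-\gamma}\to 0$, and uses that the measure of the complement is controlled. Precisely, by the improved maximal-function estimate used in the proof of Theorem~\ref{thm:almstr}, $\mu(\{M_\mu\nu^2>\lambda_k^2 E_k\}\cap B_1)\lesssim \lambda_k^{-2\delta}E_k^{\delta}\to 0$, which combined with $|Df_k|^2\le C M_\mu\nu^2$ on $K_k$ gives the crucial estimate $\int_{B_1\cap\{|Dg_k|>\lambda_k\}}|Dg_k|^2\to 0$, i.e.\ the normalized Dirichlet energies $\int_{B_1}|Dg_k|^2$ are equi-integrable and bounded.

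Next I would extract a limit. By the compactness theorem for $\Iq$-valued $W^{1,2}$ maps (and after subtracting the average $\eta\circ f_k$, using \eqref{e:oscillation} to control oscillation), up to a subsequence $g_k\to u$ strongly in $L^2(B_1,\Iq(\R^n))$ and weakly in $W^{1,2}$, with $\int_{B_1}|Du|^2\le\liminf\int_{B_1}|Dg_k|^2$. The main work is to show that $u$ is $\Dir$-minimizing: one tests the first variation $\delta T_k=0$ against a vector field modeled on a competitor perturbation and uses the estimates \eqref{e:graphcoincide}--\eqref{e:areaenergy} of the strong approximation — in particular the almost-equality between $\|T_k\|(\bC_\sigma)-2\omega_m\sigma^m$ and $\tfrac12\int_{B_\sigma}|Df_k|^2$ up to $o(E_k)$ — to transfer stationarity of $T_k$ into the approximate $\Dir$-stationarity of $g_k$, and then pass to the limit using the equi-integrability above to kill the error terms $o(E_k)E_k^{-1}\to 0$. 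This is the standard "Almgren's Lipschitz approximation $\Rightarrow$ harmonic approximation" step; in the stationary codimension-one setting it was carried out in \cite{HirschSpolaor2024}, and the argument is identical once \eqref{eq.strongdensity} is in hand. Once $u$ is $\Dir$-minimizing (hence continuous and $C^{0,\alpha}$ by \cite{DS}), the strong $W^{1,2}$-convergence $g_k\to u$ — i.e.\ $\int(|Dg_k|-|Du|)^2\to0$ and $\int|D(\eta\circ g_k)-D(\eta\circ u)|^2\to0$ — follows from the minimality of $u$ together with the lower semicontinuity, exactly as in \cite[Theorem~2.6]{DLS_Blowup}. Undoing the normalization, $u:=E_k^{1/2}u$ contradicts the failure of \eqref{eq:harmonicapp} for $k$ large.

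The main obstacle is the last point of the previous paragraph: proving that the limit $u$ is $\Dir$-minimizing. In the minimizing setting one simply compares $\|T_k\|$ against a competitor current built by patching $\bG_{g_k^{\mathrm{comp}}}$ to $T_k$ and invokes minimality; here $T_k$ is only stationary, so this route is unavailable, and one must instead derive an \emph{inner} variational (stationarity) identity for $g_k$ and pass to the limit. This is precisely where the superlinear error $E_k^{1+\gamma}$ in \eqref{e:graphcoincide}--\eqref{e:areaenergy}, rather than merely $o(E_k)$, is needed — and where the equi-integrability of $|Dg_k|^2$ coming from the Gehring estimate \eqref{as.b2-Gehring} via \eqref{eq.strongdensity} does the decisive work, since the naive weak-$L^1$ bound would only give $\int_{\{|Dg_k|>\lambda_k\}}|Dg_k|^2\lesssim\lambda_k^{-1}$, which does not tend to $0$ for the admissible choices of $\lambda_k$. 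All the remaining ingredients (compactness for $\Iq$-valued Sobolev maps, semicontinuity of the Dirichlet energy, regularity of $\Dir$-minimizers) are quoted verbatim from \cite{DS,DLS_Blowup}.
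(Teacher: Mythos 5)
Your overall architecture — contradiction/compactness, normalization $g_k=E_k^{-1/2}f_k$, and above all the use of the Gehring-type higher integrability \eqref{eq.strongdensity} to obtain equi-integrability of $|Dg_k|^2$ where the naive weak-$L^1$ bound would fail — is exactly the paper's route (the paper phrases the equi-integrability as verifying a reverse-H\"older condition, assumption c2) of \cite[Theorem 3.3]{HirschSpolaor2024}, via $\bigl(\int_{B_s}|Df_k|^{2\delta}\bigr)^{1/\delta}\lesssim \int_{B_{2s}}|Df_k|^2+\nu_k^2(B_4)^{1+\gamma}$). However, there is a genuine gap in your identification of the limit. You claim that $u$ is $\Dir$-\emph{minimizing} and then quote \cite{DS} for its $C^{0,\alpha}$ regularity and \cite[Theorem 2.6]{DLS_Blowup} for the strong $W^{1,2}$ convergence. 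Since the $T_k$ are only stationary, testing $\delta T_k=0$ against vector fields and passing to the limit yields only that $u$ is $\Dir$-\emph{stationary} (vanishing inner and outer variations); minimality of the limit is neither obtainable from these identities nor true in general in this setting — you even note yourself that the competitor-patching argument is unavailable, yet the conclusion you draw is still minimality. Consequently the two places where you invoke minimality break down: (i) the continuity/$C^{0,\alpha}$ regularity of the limit cannot be taken from the $\Dir$-minimizing theory of \cite{DS}; it is a nontrivial result for $2$-valued $\Dir$-stationary maps and is precisely what \cite{HirschSpolaor2024} supplies (and is open for general $Q$); (ii) the upgrade from weak to strong convergence of the gradients, which for minimizers follows from comparison with the limit as a competitor, requires a different argument in the stationary case.

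The paper sidesteps both issues by verifying the hypotheses c1)--c3) of the compactness theorem \cite[Theorem 3.3]{HirschSpolaor2024} for the normalized sequence — c3) being the almost-$\Dir$-stationarity $|\delta\bG_{f_k}(X)|\lesssim E_k^{1+\gamma}$ together with the comparison of $\delta\bG_{f_k}$ with the inner and outer variations of the Dirichlet energy of $f_k$ — and then importing the proof of \cite[Theorem 4.2]{HirschSpolaor2024}, which is built for $\Dir$-stationary (not minimizing) $2$-valued limits. To repair your argument you should replace every appeal to minimality by the corresponding statement for $2$-valued $\Dir$-stationary maps from \cite{HirschSpolaor2024}; with that substitution the rest of your outline is sound.
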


\begin{proof}
    We rescale to $r=1$ and we argue by contradiction for a sequence of integral currents $T_k\in \bC_4$ as in the statement with $E_k=\bE_{un}(T_k, \bC_4)\to 0$. Then we let $f_k$ and $K_k$ be the Lipschitz approximations and their good sets from Theorem \ref{thm:almstr}, and we let $\tilde{f}_k=f_k/E_k^{1/2}$. Moreover we will let $\mu_k, \nu_k$ be the measures defined in Section \ref{ss:Gehring} associated to $T_k$. If we can check that $(\tilde{f}_k)_k$ satisfies the assumptions of \cite[Theorem 3.3]{HirschSpolaor2024}, then the Theorem follows by the same proof as in  \cite[Theorem 4.2]{HirschSpolaor2024}. 

    Assumption c1) follows by definition of the sequence, while c3) follows from the following observation
    \[
    |\delta \bG_{f_k}(X)|\leq |\delta \bG_{f_k}(X)-\delta T_k(X)| \leq \|X\|_{C^1}\, \mu_k(B_{1}\setminus K_k)\lesssim E_k^{1+\gamma}\,, 
    \]
    $X$ smooth vector field supported in $\bB_1$, combined with the computations in the proof of \cite[Theorem 4.2]{HirschSpolaor2024} to estimate $|\mathcal S(\mathcal E_{f_k},\varphi)-\delta \bG_{f_k}(X)|$ and $|\mathcal I(\mathcal E_{f_k},\phi)-\delta \bG_{f_k}(X)|$, for the proper choices of vector fields $X$. 

    Finally we need to check c2). Assume that $B_{2s}(x)\subset B_1$, then 
    \begin{align*}
    \left( \int_{B_s(x)} |Df_k|^{2\delta}\right)^{\frac1\delta} 
    &\leq \left( \int_{B_s(x)} \min\left\{\phi(M_{\mu_k}\nu_k^2), \phi(\eps_{un})\right\}\,d\nu_k^2 \right)^{\frac1\delta}\\
    &\lesssim \nu_k^2(B_{2s}) \lesssim \nu_k^2(B_{2s}\cap K_k)+\nu_k^2(B_{2s}\setminus K_k)\\
    &\lesssim \int_{B_{2s}} |Df_k|^2 \,dx+   \nu_k^2(B_4)^{1+\gamma}\,,
    \end{align*}
    where in the first inequality we used the pointwise inequality in \eqref{eq.strongdensity} and the fact that $|Df_k|^2 \lesssim \eps_{un}^2$, and in the last inequality we used the integral inequality in \eqref{eq.strongdensity}.
\end{proof}

 \begin{proof}[Proof of Theorem \ref{thm:main2}]
     We observe that \cite[Proposition 4.4]{HirschSpolaor2024} for the Lipschitz approximation follows combining \eqref{eq.heightbd} with \eqref{e:oscillation}. Therefore all the results in \cite[Section 4]{HirschSpolaor2024} hold, and Theorem \ref{thm:main2} follows verbatim as in \cite[Sections 5 and 6]{HirschSpolaor2024}.
 \end{proof}




\appendix


\bibliographystyle{plain}
\bibliography{bib}

\end{document}